\documentclass[a4paper,10pt]{article}

\usepackage[english]{babel}
\usepackage[utf8]{inputenc} 
\usepackage[T1]{fontenc}

\usepackage{amsmath,amssymb,amsthm,mathrsfs,stmaryrd}

\usepackage[a4paper]{geometry}

\newtheorem{theorem}{Theorem}[section]
\newtheorem{lemma}[theorem]{Lemma}
\newtheorem{proposition}[theorem]{Proposition}

\newtheorem{remark}[theorem]{Remark}
\newtheorem{corollary}[theorem]{Corollary}

%\numberwithin{equation}{section}
%%%% debut macro %%%%
\makeatletter

\@addtoreset{equation}{section}
\makeatother
%%%% fin macro %%%%

\usepackage{tikz}
\usepackage{enumitem}

\newcommand{\bzero}{\mathbf{0}}

\newcommand{\rc}{\mathrm{c}}

\newcommand{\rd}{\mathrm{d}}
\newcommand{\ud}{\,\mathrm{d}}

\renewcommand{\div}{\mathrm{div}}

\newcommand{\be}{\boldsymbol{e}}

\renewcommand{\bf}{\boldsymbol{f}}
\newcommand{\bF}{\boldsymbol{F}}

\newcommand{\Id}{\mathrm{Id}}
\newcommand{\Lip}{\mathrm{Lip}}
\newcommand{\loc}{\mathrm{loc}}
\newcommand{\bn}{\boldsymbol{n}}
\newcommand{\N}{\mathbf{N}}

\newcommand{\p}{\partial}

\newcommand{\vphi}{\varphi}

\newcommand{\R}{\mathbf{R}}

\renewcommand{\tilde}[1]{\widetilde{#1}}
\newcommand{\bu}{\boldsymbol{u}}
\newcommand{\uloc}{\mathrm{uloc}}

\newcommand{\rw}{\mathrm{w}}
\newcommand{\weak}{\mathrm{weak}}
\newcommand{\Z}{\boldsymbol{Z}}

\title{Well-posedness of the Stokes-transport system in bounded domains and in the infinite strip}

\author{Antoine Leblond\footnote{Sorbonne Université, CNRS, Université de Paris, Laboratoire Jacques-Louis Lions (LJLL), F-75005 Paris, France, mail: leblond@ljll.math.upmc.fr}}

\date{March, 2021}

% \sloppy              

%\usepackage{showlabels}
%\usepackage[final]{showkeys}
%\usepackage{refcheck}

\begin{document}

\maketitle

\begin{abstract}
	We consider the Stokes-transport system, a model for the evolution of an incompressible viscous fluid with inhomogeneous density. This equation was already known to be globally well-posed for any $L^1\cap L^\infty$ initial density with finite first moment in $\R^3$. We show that similar results hold on different domain types. We prove that the system is globally well-posed for $L^\infty$ initial data in bounded domains of $\R^2$ and $\R^3$ as well as in the infinite strip $\R\times(0,1)$. These results contrast with the ill-posedness of a similar problem, the incompressible porous medium equation, for which uniqueness is known to fail for such a density regularity.
\end{abstract}

\emph{Keywords:} Incompressible viscous fluid, active scalar equation, global well-posedness, steady Stokes equation, transport equation

% \tableofcontents

\section{Introduction}
	This contribution is dedicated to the study of the following coupling between the transport equation and the Stokes equation,
	\begin{equation} \label{eq:base} \tag{1.0}
	\left\{
	\begin{array}{rcl}
	\p_t\rho + \bu\cdot\nabla \rho &=&0 \\
	-\Delta \bu + \nabla p &=& -\rho \be_z \\
	\div \, \bu &=& 0 \\
	\rho|_{t=0} &=& \rho_0,
	\end{array}
	\right.
	\end{equation}
	where $\rho$, $\bu$ and $p$ respectively stand for the density, velocity and pressure of a fluid, $\rho_0$ is the initial density profile and $\be_z$ is the vertical upward unitary vector. This system is a model of evolution of an incompressible and viscous fluid having an inhomogeneous density or buoyancy subject to the gravity directed by $-\be_z$. It differs from the classical Boussinesq equation by neglecting the velocity self-advection term and the diffusion of the density. It is especially derived as the mesoscopic model of a cloud of inertialess particles sedimenting in a Stokes fluid, see for instance \cite{hofer,mecherbetsed}. This system belongs to a broad family of transport equations with non-local velocity field, the active scalar equations, meaning that $\bu$ depends on $\rho$ in a non-local way. The vorticity equation, the surface quasi-geostrophic equation and the incompressible porous medium equation (IPM) are examples of extensively studied systems having this structure, see \cite{bae} for an overview. In particular, the IPM is a well-known model for the evolution of an incompressible inhomogeneous fluid inside a porous material, which writes as the system (\ref{eq:base}) where one replaces the Stokes equation by Darcy's law, namely
	\begin{equation*}
	\bu + \nabla p = - \rho\be_z.
	\end{equation*}
	
	To our knowledge, (\ref{eq:base}) has been shown to be well-posed in the whole space $\R^3$ by Höfer \cite{hofer} and Mecherbet \cite{mecherbet}. In particular, Mecherbet proved that for any $\rho_0 \in L^1\cap L^\infty$ having a finite first moment, the system (\ref{eq:base}) admits a unique solution, global in time,
	\begin{equation*}
	(\rho,\bu) \in L^\infty(\R_+; L^1(\R^3)\cap L^\infty(\R^3)) \times L^\infty(\R_+;W^{1,\infty}(\R^3)).
	\end{equation*}
	We thereafter state the well-posedness of (\ref{eq:base}) for initial data $\rho_0$ in $L^\infty(\Omega)$, for any regular enough bounded subdomain $\Omega$ of $\R^2$ and $\R^3$, as well as in the infinite strip $\R\times(0,1)$. These results stand out with the ill-posedness of IPM, for which uniqueness fails on various domains for weak solutions associated to $L^\infty$ initial data, see \cite{cordoba2011lack,szekelyhidi2012relaxation}. Even for Sobolev data, the question of global in time well-posedness of IPM is still open, see \cite{kiselev2021small}, although the particular case of a small and smooth perturbation of a stationary density profile does lead to a global solution, as proved in \cite{castro2019global}.
	
	In both Höfer and Mecherbet works, the proof of existence of a solution relies on a fixed point argument, respectively a contracting map and a Picard iteration. The latter consists in solving successively the Stokes and the transport equations, providing function sequences that appear to be convergent. This convergence is proven thanks to a stability estimate on the solutions of the transport equation, combined with an energy estimate for the Stokes equation. In the full space $\R^3$, one can express the solution of the Stokes equation as the convolution with the adequate Green kernel. Thanks to this explicit formula, Mecherbet established the stability estimate on solutions of the Stokes equation, controlled by the Wasserstein distance between their source terms. Combined with a stability estimate for the transport equation, expressed with the Wasserstein distance, this provides the contraction inequality allowing to apply the Picard argument. In particular, the Wasserstein distance allows to state the stability estimate without any derivability assumption on the density. In the following, we will emphasize the fact that the shape of the spatial domain on which one solves (\ref{eq:base}) strongly conditions the way one have to address the Stokes part. Apart from that, the classical transport theory is not as much sensitive to the geometry of the domain, as long as the velocity field is bounded and Lipschitz, with extra condition on the possible boundaries.  Moreover it appears that $\rho$ is the push-forward of $\rho_0$ by the flow of $\bu$. In particular, if $\rho_0$ is a patch, the density $\rho$ will remain a patch at all times.
	
	In this paper, we follow a strategy similar to the one adopted by Mecherbet \cite{mecherbet} in order to prove the well-posedness of (\ref{eq:base}) on bounded domains of $\R^2$ and $\R^3$. Since the Green kernel associated to the Stokes equation is no longer explicit for such domains in general, the remaining exploitable tools are the energy estimate and an elliptic gain of regularity due to Stokes equation. We also replaced the Wasserstein distance, the use of which requires an explicit kernel, by some negative Sobolev norms. Notice that both metrics are relatable, see for instance \cite[subsection 5.5.2]{santambrogio}. We also impose the Dirichlet boundary condition on the velocity, considering more precisely the system
	\begin{equation} \label{eq:st}
		\left\{
		\begin{array}{rcll}
			\p_t \rho + \bu\cdot\nabla \rho &=& 0 &\text{in } \R_+\times\Omega, \\
			-\Delta \bu + \nabla p &=& -\rho \be_z &\text{in } \R_+\times \Omega, \\
			\div \, \bu &=& 0 &\text{in } \R_+ \times \Omega, \\
			\bu &=& \bzero &\text{in } \R_+ \times \p\Omega, \\
			\rho|_{t=0} &=& \rho_0 &\text{in } \Omega.
		\end{array}
		\right.
	\end{equation}
    Our first result is the following well-posedness theorem for this system, which comes together with a stability estimate left in Proposition \ref{prop:stabsys}.
	\begin{theorem} \label{thm:1}
		Let $\Omega$ be a bounded domain of class $C^2$ of $\R^d$ for $d=2$ or $3$. For any $\rho_0 \in L^\infty(\Omega)$ there exists a unique solution $(\rho,\bu)$ to (\ref{eq:st}) in
		\[L^\infty(\R_+;L^\infty(\Omega)) \times L^\infty(\R_+;W^{1,\infty}(\Omega)).\]
	\end{theorem}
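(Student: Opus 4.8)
The plan is to implement the Picard iteration scheme sketched in the introduction, alternately solving the Stokes system and the transport equation. Starting from $\rho^0 \equiv \rho_0$, I would define $\bu^n$ as the solution of the steady Stokes problem with source $-\rho^n \be_z$ and homogeneous Dirichlet condition, then let $\rho^{n+1}$ be the solution of the linear transport equation $\p_t \rho^{n+1} + \bu^n \cdot \nabla \rho^{n+1} = 0$ with datum $\rho_0$. The first batch of estimates is uniform control: the transport equation preserves every $L^p$ norm (in particular $\|\rho^{n}(t)\|_{L^\infty} = \|\rho_0\|_{L^\infty}$ for all $t$, since $\div\,\bu^n = 0$ and $\bu^n$ is tangent to $\p\Omega$), and the Stokes estimates give $\|\bu^n\|_{W^{1,\infty}}$ controlled in terms of $\|\rho^n\|_{L^\infty}$ — this is where the elliptic gain of regularity enters, going $L^\infty \hookrightarrow$ (some $W^{2,q}$ with $q$ large) $\hookrightarrow W^{1,\infty}$ via Sobolev embedding, using that $\Omega$ is $C^2$ and $d \le 3$. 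Hence the whole sequence lives in a fixed ball of $L^\infty(\R_+;L^\infty) \times L^\infty(\R_+;W^{1,\infty})$, and in particular each $\bu^n$ is bounded and Lipschitz in space uniformly in $n$ and $t$, so the flow maps $X^n$ are well-defined and $\rho^n = (X^n_t)_\# \rho_0$.

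Next comes the contraction. For the difference $\bu^{n}-\bu^{m}$, the Stokes energy estimate in a negative Sobolev norm controls $\|\bu^n - \bu^m\|_{L^2}$ (or a suitable norm) by $\|\rho^n - \rho^m\|_{H^{-1}}$ — this replaces Mecherbet's Wasserstein bound and is the natural substitute when no explicit Green kernel is available. For the transport side I would use a stability estimate in $H^{-1}$ (or $\dot H^{-1}$): the difference $\rho^{n+1}-\rho^{m+1}$ of two push-forwards along nearby bounded Lipschitz vector fields is controlled in $H^{-1}$ by the time integral of $\|\bu^{n}-\bu^{m}\|_{L^2}$ (or $\|\bu^n-\bu^m\|_{L^\infty}$), with a constant depending on the uniform Lipschitz and $L^\infty$ bounds already established and growing like $e^{Ct}$. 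Chaining these two inequalities yields, on a time interval $[0,T]$ with $T$ small enough (or with a Gronwall-type weight over all time), a genuine contraction $\sup_{[0,T]}\|\rho^{n+1}-\rho^{m+1}\|_{H^{-1}} \le \tfrac12 \sup_{[0,T]}\|\rho^{n}-\rho^{m}\|_{H^{-1}}$, forcing $(\rho^n)$ to be Cauchy in $C([0,T];H^{-1})$. Interpolating the $H^{-1}$ convergence against the uniform $L^\infty$ bound upgrades this to strong convergence in $C([0,T];L^p)$ for every finite $p$, which is enough to pass to the limit in both the (linear in $\rho$) transport equation and the Stokes equation and obtain a solution on $[0,T]$; since all bounds are global, the solution extends to $\R_+$.

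Uniqueness follows from the same two stability estimates applied directly to two hypothetical solutions: the difference of densities satisfies a closed Gronwall inequality in $H^{-1}$ that forces it to vanish, and then the Stokes equation forces the velocities to agree. This is precisely the content that I expect to be packaged as Proposition~\ref{prop:stabsys}. The main obstacle, and the place demanding the most care, is the transport stability estimate in a negative norm on a bounded domain: one must justify that the flows $X^n$ stay in $\overline{\Omega}$ (tangency of $\bu^n$ at the boundary, which here is automatic from $\bu^n|_{\p\Omega} = \bzero$), control the distance between two flows in $L^\infty$ by Gronwall from the difference of the vector fields, and then convert this Lagrangian control into an Eulerian $H^{-1}$ bound on the difference of push-forwards — testing against an $H^1_0(\Omega)$ function and changing variables, which is where the regularity of $\p\Omega$ and the uniform Lipschitz bound on $\bu^n$ are used. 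Everything else (uniform $L^p$ conservation, Stokes $W^{1,\infty}$ regularity, passing to the limit) is standard once these pieces are in place.
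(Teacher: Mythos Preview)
Your proposal is correct and follows essentially the same route as the paper: Picard iteration alternating Stokes and transport, uniform $L^\infty$/$W^{1,\infty}$ bounds from norm conservation and elliptic regularity, contraction in $L^\infty(0,T;H^{-1})$ via the chain (Stokes: $\|\bu^n-\bu^m\|_{H^1}\lesssim\|\rho^n-\rho^m\|_{H^{-1}}$; transport stability: $\|\rho^{n+1}-\rho^{m+1}\|_{H^{-1}}\lesssim T e^{CT}\|\bu^n-\bu^m\|_{L^2}$), and globality from conservation of $\|\rho\|_{L^\infty}$. The only cosmetic difference is that the paper passes to the limit using the $\mathrm{weak}^*$ convergence of $\rho^N$ together with the strong $H^1$ convergence of $\bu^N$, rather than your interpolation to $L^p$; both work.
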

	
	We further deal with the infinite strip domain $\R\times(0,1)$. The main difference lies in the analysis of the Stokes equation. To work with $L^\infty$ densities requires to deal with velocity profiles that are not square-integrable. To overcome this observation we work with Kato spaces, also called uniformly local Sobolev spaces. Using some classical tools we provide a proof of the well-posedness of the Stokes equation in this framework, since we could not find this result in the literature. In particular we prove that a $L^\infty$ datum induces a $W^{1,\infty}$ solution, as in the bounded domain case, which will provide handy Lipchitz velocity fields to deal further with the transport part. The method consists in considering an increasing covering of bounded open subsets of the strip, and to solve the Stokes problem on each of these subdomains. This provides a sequence of functions on $\R\times(0,1)$. By managing carefully the interior estimates of the elements of this sequence, we prove its boundedness in the uniformly locally $H^1$ function space, denoted $H^1_\uloc$ and defined in paragraph \ref{sbsbsec:spaces}. Therefore we obtain a $H^1_\uloc$ solution by a compact argument. Uniqueness comes from the well-posedness in the classical $H^1$ framework. Using the elliptic regularity of the Stokes equation in bounded domains, we prove the $H^2_\uloc$ regularity of the solution on the strip. Hence we obtain a leverage to establish existence of a unique solution in $W^{2,q}_\uloc$ for $L^q_\uloc$ data for exponents $2 < q < \infty$, and to conclude to the well-posedness of the problem in $W^{1,\infty}$ for $L^\infty$ data by Sobolev embeddings. The proof of the well-posedness of the coupling then lies in the extension of results used in the bounded domain case to the infinite strip through the uniformly local topology. Precisely, we consider in $\Omega = \R\times(0,1)$ the system
	\begin{equation} \label{eq:st0}
		\left\{
		\begin{array}{rcll}
			\p_t \rho + \bu\cdot\nabla \rho &=& 0 &\text{in } \R_+ \times \Omega, \\
			- \Delta \bu + \nabla p &=& -\rho \be_z &\text{in } \R_+\times \Omega, \\
			\div \, \bu &=& 0 &\text{in } \R_+\times\Omega, \\
			\bu &=& \bzero &\text{in } \R_+\times\p\Omega, \\
			\int u_1 \ud z &=& 0 & \text{in } \R_+\times\R, \\
			\rho|_{t=0} &=& \rho_0 &\text{in } \Omega,
		\end{array}
		\right.
	\end{equation} 
	where the extra condition on the flux $\int u_1 \ud z$ is introduced and discussed in paragraph \ref{sbsbsec:flux}. In the end, the well-posedness of (\ref{eq:st0}) almost writes as in Theorem \ref{thm:1}, with a similar stability result stated in Proposition \ref{prop:stabstrip}.
	\begin{theorem} \label{thm:2}
		Let $\Omega = \R\times(0,1)$. For any $\rho_0 \in L^\infty(\Omega)$ there exists a unique solution $(\rho,\bu)$ to (\ref{eq:st0}) in
		\[L^\infty(\R_+;L^\infty(\Omega)) \times L^\infty(\R_+;W^{1,\infty}(\Omega)).\]
	\end{theorem}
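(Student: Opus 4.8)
The plan is to reproduce, in the uniformly local topology of the strip $\Omega=\R\times(0,1)$, the Picard scheme behind Theorem~\ref{thm:1}. Two ingredients feed the construction. First, the well-posedness of the stationary Stokes system on $\Omega$, established in the preceding section: to each $\rho\in L^\infty(\Omega)$ it associates a unique divergence-free $\bu\in W^{1,\infty}(\Omega)$ vanishing on $\p\Omega$ with zero horizontal flux $\int u_1\ud z=0$, subject to $\|\bu\|_{W^{1,\infty}(\Omega)}\lesssim\|\rho\|_{L^\infty(\Omega)}$; the passage from the local to the global scale runs through the chain $H^1_\uloc\to H^2_\uloc\to W^{2,q}_\uloc\to W^{1,\infty}$. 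Second, the classical transport theory: such a $\bu$ being bounded, Lipschitz, divergence free and tangent to $\p\Omega$ (its vertical component vanishes on $\{z=0\}$ and $\{z=1\}$), its flow $X$ is a globally defined, measure-preserving bi-Lipschitz diffeomorphism of $\Omega$, so the transported density $\rho(t,\cdot)=\rho_0\circ X(t,\cdot)^{-1}$ satisfies $\|\rho(t)\|_{L^\infty(\Omega)}=\|\rho_0\|_{L^\infty(\Omega)}$ for all $t\ge0$.

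Starting from $\bu^{(0)}=\bzero$, define inductively $\rho^{(n)}(t,\cdot)$ as the push-forward of $\rho_0$ by the flow $X^{(n)}$ of $\bu^{(n)}$ and $\bu^{(n+1)}$ as the Stokes solution with source $-\rho^{(n)}\be_z$. Conservation of the $L^\infty$ norm along the measure-preserving flows gives $\|\rho^{(n)}(t)\|_{L^\infty}=\|\rho_0\|_{L^\infty}$, hence $\|\bu^{(n)}(t)\|_{W^{1,\infty}}\lesssim\|\rho_0\|_{L^\infty}$ uniformly in $n$ and $t$, so the $X^{(n)}$ are Lipschitz with constant bounded by $e^{C\|\rho_0\|_{L^\infty}t}$. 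Convergence rests on the two linear estimates behind Proposition~\ref{prop:stabstrip}. On one side, the push-forward representation together with the Lipschitz control of the flows, once localized, yields a transport stability estimate bounding $\|\delta\rho_n(t)\|_{H^{-1}_\uloc}$ by $\|\rho_0\|_{L^\infty}\int_0^t e^{C\|\rho_0\|_{L^\infty}(t-s)}\|\delta\bu_n(s)\|_{L^2_\uloc}\ud s$, where $\delta\rho_n=\rho^{(n)}-\rho^{(n-1)}$ and $\delta\bu_n=\bu^{(n)}-\bu^{(n-1)}$; on the other side, the localized energy estimate for the Stokes difference, using the Poincaré inequality in the bounded variable $z$, gives $\|\delta\bu_n\|_{L^2_\uloc}\lesssim\|\delta\rho_{n-1}\|_{H^{-1}_\uloc}$. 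Composing the two produces, on every finite interval $[0,T]$, a Volterra inequality
\[\sup_{[0,t]}\|\delta\rho_n\|_{H^{-1}_\uloc}\le C(T)\int_0^t\sup_{[0,s]}\|\delta\rho_{n-1}\|_{H^{-1}_\uloc}\ud s,\qquad 0\le t\le T,\]
whose iteration shows that $(\rho^{(n)},\bu^{(n)})$ is a Cauchy sequence in $C([0,T];H^{-1}_\uloc)\times C([0,T];H^1_\uloc)$. Since $T$ is arbitrary and the $L^\infty$ bound on the iterates is independent of $T$, the limit $(\rho,\bu)$ is a global solution of \eqref{eq:st0}, obtained by passing to the limit in the weak formulations, and it belongs to $L^\infty(\R_+;L^\infty(\Omega))\times L^\infty(\R_+;W^{1,\infty}(\Omega))$ by the uniform bounds and lower semicontinuity of the norms.

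Uniqueness follows from the same stability mechanism: two solutions of \eqref{eq:st0} with the same datum have densities whose $H^{-1}_\uloc$ distance obeys a linear Grönwall inequality with vanishing initial value, hence coincide, and the velocities coincide by uniqueness for the Stokes system.

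I expect the main obstacle to be the validity of these two linear estimates in the uniformly local scale, rather than the iteration itself. In the energy estimate one cannot test globally against the velocity difference; testing against $\chi^2\delta\bu_n$ for a translated family of cut-offs $\chi$ produces commutator terms $\nabla\chi\cdot\nabla\delta\bu_n$ and a pressure contribution $\int(\nabla\chi^2)\cdot\delta\bu_n\,\delta p_n$ that must be reabsorbed using the uniform $H^1_\uloc$ bound and the zero-flux normalization, all with constants insensitive to the translation of $\chi$. Dually, the transport stability in $H^{-1}_\uloc$ requires the push-forward structure and the Lipschitz control of the flows to survive localization, which is precisely where the tangency of $\bu$ to $\p\Omega$ is used to keep the flow inside $\Omega$ and where one needs that, on a finite time interval, the flow displaces points by a bounded amount. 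Once these two localized inequalities — the content of Proposition~\ref{prop:stabstrip} — are in hand, the rest of the argument transcribes the proof of Theorem~\ref{thm:1} essentially verbatim.
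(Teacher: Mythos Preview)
Your proposal is essentially the same approach as the paper's: a Picard iteration with uniform $L^\infty\times W^{1,\infty}$ bounds, and convergence obtained by composing the transport stability in $H^{-1}_\uloc$ with the Stokes estimate $H^{-1}_\uloc\to H^1_\uloc$. One point is worth tightening. You phrase the transport stability as an integral Volterra inequality valid on an arbitrary interval $[0,T]$, but the paper's version (Proposition~\ref{prop:stabubd}; your reference to Proposition~\ref{prop:stabstrip} is the \emph{system} stability) only holds on $[0,\bar T]$ with $\bar T$ depending on $\|\nabla\bu_i\|_{L^\infty}$: the push-forward/duality argument passes through the convex interpolation $X^\theta=\theta X_1+(1-\theta)X_2$, and one needs $t$ small to make $X^\theta(t)$ injective with bounded Jacobian in order to control $\|\nabla(\chi_k\vphi)\circ X^\theta(t)\|_{L^2}$. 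Accordingly the paper obtains a short-time contraction and then globalizes via the conservation of $\|\rho(t)\|_{L^\infty}$ and a continuation argument, rather than a single Volterra iteration on $[0,T]$. This does not break your scheme --- restricting your inequality to $[0,\bar T]$ and then extending is exactly the paper's route --- but the integral form on arbitrary $T$ is not delivered by the push-forward argument as stated.
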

	It is rather natural to wonder if this result extends to the case of the layer domain $\R^2\times(0,1)$. It appears that the Stokes problem in this unbounded domain raises some additional difficulties, among those finding a functional space in which the problem admits a unique solution, in accordance with the handling of the interior estimates mentioned in the previous paragraph, see Remark \ref{rk:lay} for further details. We abstain from considering this case in the present work.
	
	The paper is organized in Section \ref{sec:bounded}, dedicated to the bounded domain case, and Section \ref{sec:unbounded}, dedicated to the infinite strip case. Both sections are ordered in the same way. Subsections 1 recall and prove the necessary prerequisites about the Stokes equation, including the well-posedness complete proof in the infinite strip in Section \ref{sec:unbounded}. Subsections 2 contain preliminary results concerning the transport theory, and in particular the stability estimates proofs. Subsections 3 are dedicated to the proofs of Theorem \ref{thm:1} and Theorem \ref{thm:2}, respectively. Ultimately, we state and prove the stability estimates of the coupling in Subsections 4.

	\subsection*{Definitions and notations}
	The dimension $d$ is always $2$ or $3$. A \emph{domain} $\Omega$ is a non-empty open and simply connected subset of $\R^d$. The Bochner spaces are denoted by $L^q([0,T);B(\Omega))$ with $1 \leq q \leq \infty$, $T \in [0,\infty]$ and $B(\Omega)$ a Banach or a Fréchet space of functions defined in $\Omega$. It is endowed with its classical norm denoted here $\|\cdot\|_{L^q(0,T;B)}$, the space domain being specified only when differing from the whole domain $\Omega$. A vector valued map is denoted by a bold symbol, implicitly of size $d$. We note $\bu|_{\p\Omega}$ the trace of $\bu$ on the boundary of a domain $\Omega$, when the boundary is regular enough to define it. We sometimes write $f \equiv c$ to signify that a function $f$ is constant to $c$ with respect to time. We denote by $C$ any non-negative constant that is adjusted from one line to another, independent of the data and we specify its eventual space or exponent dependencies when necessary. We sometimes write $f \lesssim g$, meaning there exists such a constant $C$ such that $f \leq C g$, as well as $f \simeq g$, meaning $f \lesssim g$ and $g \lesssim f$. 
	
\section{Well-posedness of the coupling in a bounded domain} \label{sec:bounded}

	In this section, and unless stated otherwise, $\Omega$ denotes a bounded domain of $\R^d$ with Lipschitz boundary. 
	
	\subsection{Preliminaries on the Stokes problem in a bounded domain}
		Let us recall the Stokes problem on $\Omega$, defined in the weak sense,
		\begin{equation} \label{eq:s}
		\left\{
		\begin{array}{rcll}
		- \Delta \bu + \nabla p &=& \bf &\text{in } \Omega, \\
		\div \, \bu &=& g &\text{in } \Omega, \\
		\bu &=& \bzero & \text{on } \p\Omega,
		\end{array}
		\right.
		\end{equation}
		where $\bf$ and $g$ belong to functional spaces specified further. Notice that it is necessary for $g$ to satisfy the following compatibility condition due to the homogeneous assumption, 
		\begin{equation} \label{eq:compcond}
			\int_\Omega g = \int_\Omega \div\,\bu = \int_{\p\Omega} \bu\cdot\bn = 0.
		\end{equation}
		The well-posedness of this problem is well known for Sobolev data, see in particular \cite[Theorem IV.6.1 and Exercise I.6.3]{galdi}, reported below. We especially use that $L^\infty$ data induce $W^{1,\infty}$ solutions, see the following corollary, which will provide velocity fields easy to deal with in the transport part.
		\begin{theorem}[Galdi] \label{thm:galdi}
			Let $\Omega$ be a bounded domain of $\R^d$ of class $C^2$ and let $1<q<\infty$.
			
			1. For any $\bf \in L^q(\Omega)$ and $g \in W^{1,q}(\Omega)$ satisfying (\ref{eq:compcond}), there exists a unique\footnote{Unless explicitly stated otherwise, the canonical representative of $p$ considered is the one having zero average over $\Omega$.} pair $(\bu,p)$ in $W^{2,q}(\Omega)\times \left(W^{1,q}(\Omega)/ \R\right)$ satisfying (\ref{eq:s}), which moreover obeys the inequality
			\begin{equation}\label{eq:sest}
				\|\bu\|_{W^{2,q}} + \|p\|_{W^{1,q}} \leq C\left( \|\bf\|_{L^q} + \|g\|_{W^{1,q}}\right);
			\end{equation}
			
			2. For any $\bf \in W^{-1,q}(\Omega)$ there exists a unique pair $(\bu,p)$ in $W^{1,q}(\Omega)\times \left(L^q(\Omega)/\R\right)$ satisfying (\ref{eq:s}) with $g = 0$, which moreover obeys the inequality
			\begin{equation} \label{eq:sestd}
				\|\bu\|_{W^{1,q}} + \|p\|_{L^q} \leq C\|\bf\|_{W^{-1,q}}.
			\end{equation}
		\end{theorem}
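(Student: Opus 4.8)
The plan is to reduce both assertions to the special case $g=0$, and then to develop the $L^q$-theory of the stationary Stokes operator by bootstrapping from the Hilbert case $q=2$ to general $1<q<\infty$ via singular-integral estimates. First I would remove the inhomogeneous divergence. Since $\Omega$ is bounded and of class $C^2$, hence Lipschitz, Bogovskii's construction furnishes a bounded linear operator $\mcB$ from $\{h\in W^{1,q}(\Omega):\int_\Omega h=0\}$ into $W^{2,q}(\Omega)\cap W^{1,q}_0(\Omega)^d$ with $\div\,\mcB h=h$ and $\|\mcB h\|_{W^{2,q}}\le C\|h\|_{W^{1,q}}$; it is built from the explicit singular kernel on subdomains star-shaped with respect to a ball and then glued together by a partition of unity. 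In case~1 the compatibility condition (\ref{eq:compcond}) is exactly what makes $\boldsymbol{w}:=\mcB g$ available, and putting $\bv=\bu-\boldsymbol{w}$ the pair $(\bv,p)$ must solve $-\Delta\bv+\nabla p=\bf+\Delta\boldsymbol{w}$, $\div\,\bv=0$, $\bv|_{\p\Omega}=\bzero$, with right-hand side in $L^q$; case~2 already has $g=0$, so there $\boldsymbol{w}=\bzero$ and the right-hand side lies in $W^{-1,q}$. It therefore suffices to treat the homogeneous-divergence problem with the stated bounds.

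For $q=2$ I would argue variationally. On the closed subspace $V=\{\bvphi\in W^{1,2}_0(\Omega)^d:\div\,\bvphi=0\}$ the form $a(\bv,\bvphi)=\int_\Omega\nabla\bv:\nabla\bvphi$ is continuous and, by Poincar\'{e}'s inequality, coercive, so Lax--Milgram produces a unique $\bv\in V$ satisfying the weak formulation against every test field in $V$, with $\|\bv\|_{W^{1,2}}\le C\|\bf\|_{W^{-1,2}}$. The pressure is then recovered from de Rham's theorem: the functional $\bvphi\mapsto\langle\bf,\bvphi\rangle-\int_\Omega\nabla\bv:\nabla\bvphi$ vanishes on $V$, hence equals $\nabla p$ for some $p\in L^2(\Omega)/\R$, the quantitative bound $\|p\|_{L^2}\le C\|\nabla p\|_{W^{-1,2}}$ being Ne\v{c}as' inequality, equivalently the surjectivity (with bounded right inverse) of $\div:W^{1,2}_0\to L^2_0$ supplied once more by Bogovskii. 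When the source belongs to $L^2$, the $W^{2,2}$ estimate of case~1 follows from the classical difference-quotient argument for the Stokes system, namely interior estimates combined with boundary estimates after flattening $\p\Omega$.

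To reach general $1<q<\infty$ I would use potential theory. A $C^2$ atlas with a subordinate partition of unity localizes the system; in the interior, representing $\bv$ and $\nabla p$ as convolutions against the Oseen fundamental tensor and its associated pressure kernel reduces the $W^{2,q}$ estimate, and the $W^{1,q}$ bound on $p$, to the $L^q$-boundedness of Calder\'{o}n--Zygmund singular integrals, while near $\p\Omega$ one flattens the boundary by the $C^2$ diffeomorphism and controls the resulting variable-coefficient perturbation together with the half-space Stokes Green tensor and its boundary potentials. The lower-order terms generated by the cut-offs and by the flattening are then absorbed, either through an interpolation inequality of the shape $\eps\|\bv\|_{W^{2,q}}+C_\eps\|\bv\|_{L^q}$ or, for the residual $\|\bv\|_{L^q}$, via the $q=2$ estimate together with a compactness argument; the range $q<2$ can alternatively be deduced from $q>2$ by duality, the Stokes operator being formally self-adjoint. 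Uniqueness in each class then follows by testing the homogeneous equation against the solution when $q\ge2$ and by a duality argument against the $q'$-problem when $q<2$. The hard part will be precisely this last stage: establishing the sharp $L^q$ mapping properties of the singular integrals attached to the Oseen tensor and its boundary analogues, and bookkeeping the perturbative terms produced by localization and by flattening a merely $C^2$ boundary so that they can be absorbed rather than controlled in their own right — by comparison, the Bogovskii lift, the Lax--Milgram step, and the de Rham recovery are routine.
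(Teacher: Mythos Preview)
Your proposal is a reasonable and essentially correct outline of the classical proof of this result, but there is nothing to compare it against: the paper does \emph{not} prove this theorem. It is stated as a preliminary and simply cited from Galdi's monograph (Theorem~IV.6.1 and Exercise~I.6.3 there); the paper only uses the estimates (\ref{eq:sest}), (\ref{eq:sestd}) and the $L^\infty\to W^{1,\infty}$ corollary obtained from them by Sobolev embedding.

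That said, your sketch is sound. The reduction to $g=0$ via a Bogovski\u{\i} lift, the Lax--Milgram/de~Rham treatment of the Hilbert case, and the passage to general $q$ through Calder\'on--Zygmund theory for the Oseen tensor together with boundary flattening is exactly the strategy carried out in Galdi's book. Your identification of the ``hard part'' --- the $L^q$ mapping properties of the boundary Stokes potentials on a $C^2$ domain and the absorption of localization errors --- is accurate; these are the chapters-long parts of the argument in the reference. One minor point: in case~2 with $\bf\in W^{-1,q}$, the Bogovski\u{\i} operator you invoke must also be bounded $L^q_0\to W^{1,q}_0$ (one regularity index lower than what you wrote for case~1), which is the version actually needed to run the de~Rham/Ne\v{c}as step at that level; this is standard but worth stating explicitly.
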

	The $L^\infty$ to $W^{1,\infty}$ regularity of the problem is deduced from the Sobolev embeddings of $W^{2,4}(\Omega)$ in $W^{1,\infty}(\Omega)$ and $W^{1,4}(\Omega)$ in $L^\infty(\Omega)$ for $d = 2$ and $3$.
		\begin{corollary}
			Let $\Omega$ be a bounded domain of $\R^d$ of class $C^2$. For any $\bf \in L^\infty(\Omega)$, there exists a unique pair $(\bu,p)$ in $W^{1,\infty}(\Omega) \times \left(L^\infty(\Omega)/\R\right)$ satisfying (\ref{eq:s}), which moreover obeys the inequality
			\begin{equation} \label{eq:sestub}
				\|\bu\|_{W^{1,\infty}} + \|p\|_{L^\infty} \leq C \|\bf\|_{L^\infty}.
			\end{equation}
		\end{corollary}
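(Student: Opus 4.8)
\emph{Proof plan.} The plan is to deduce the corollary directly from Theorem \ref{thm:galdi} by combining the boundedness of $\Omega$ with the critical Sobolev embeddings, choosing once and for all the exponent $q=4$, which is supercritical for both admissible dimensions $d=2$ and $d=3$.

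For existence, since $\Omega$ is bounded we have $L^\infty(\Omega)\hookrightarrow L^4(\Omega)$, so that $\bf\in L^4(\Omega)$. Taking $g=0$, which trivially satisfies the compatibility condition (\ref{eq:compcond}), part~1 of Theorem \ref{thm:galdi} with $q=4$ yields a unique pair $(\bu,p)\in W^{2,4}(\Omega)\times\bigl(W^{1,4}(\Omega)/\R\bigr)$ solving (\ref{eq:s}), together with the estimate
\[ \|\bu\|_{W^{2,4}} + \|p\|_{W^{1,4}} \leq C\|\bf\|_{L^4} \leq C\|\bf\|_{L^\infty}. \]
Because $4 > 3 \geq d$, Morrey's inequality on the $C^2$ (hence Lipschitz) domain $\Omega$ gives the embeddings $W^{2,4}(\Omega)\hookrightarrow W^{1,\infty}(\Omega)$ and $W^{1,4}(\Omega)\hookrightarrow L^\infty(\Omega)$; applying them to the bound above produces $(\bu,p)\in W^{1,\infty}(\Omega)\times\bigl(L^\infty(\Omega)/\R\bigr)$ and the claimed inequality (\ref{eq:sestub}).

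For uniqueness within the class $W^{1,\infty}(\Omega)\times\bigl(L^\infty(\Omega)/\R\bigr)$, I would take two such solutions and note that their difference $(\bu,p)$ solves (\ref{eq:s}) with $\bf=\bzero$ and $g=0$. Since $\Omega$ is bounded, $W^{1,\infty}(\Omega)\subset H^1(\Omega)$ and $L^\infty(\Omega)\subset L^2(\Omega)$, so $(\bu,p)\in W^{1,2}(\Omega)\times\bigl(L^2(\Omega)/\R\bigr)$; as $\bzero\in W^{-1,2}(\Omega)$, the uniqueness statement of part~2 of Theorem \ref{thm:galdi} with $q=2$ forces $\bu=\bzero$ and $p=0$ in $L^2(\Omega)/\R$, hence also in $L^\infty(\Omega)/\R$.

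I do not expect a serious obstacle: the corollary is essentially a bookkeeping step that transfers the a priori estimates of Theorem \ref{thm:galdi} through the Sobolev embeddings. The only points deserving a line of care are checking that a single exponent ($q=4$) is supercritical for all $d\in\{2,3\}$, so that the statement holds uniformly in the dimension, and, on the uniqueness side, verifying that the candidate class $W^{1,\infty}\times(L^\infty/\R)$ is contained — thanks to boundedness of $\Omega$ — in the class $W^{1,2}\times(L^2/\R)$ in which part~2 of the theorem already guarantees uniqueness.
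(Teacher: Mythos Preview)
Your proposal is correct and matches the paper's approach exactly: the paper also deduces the corollary from Theorem~\ref{thm:galdi} with $q=4$ via the Sobolev embeddings $W^{2,4}(\Omega)\hookrightarrow W^{1,\infty}(\Omega)$ and $W^{1,4}(\Omega)\hookrightarrow L^\infty(\Omega)$, valid for $d=2,3$. Your treatment is in fact more detailed than the paper's one-line sketch, in particular your explicit handling of uniqueness by embedding back into the $q=2$ framework.
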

		
\subsection{Preliminaries on the transport equation and stability estimates}

	Let us consider the transport equation, in the weak sense, for a given vector field $\bu \in L^\infty(\R_+;W^{1,\infty}(\Omega))$ satisfying the Dirichlet condition $\bu|_{\p\Omega} \equiv 0$,
	\begin{equation} \label{eq:t}
		\left\{
		\begin{array}{rcll}
			\p_t \rho + \bu \cdot \nabla \rho &= & 0 &\text{in } \R_+\times\Omega,\\
			\rho(0,\cdot) &=& \rho_0 &\text{in } \Omega.
		\end{array}
		\right.
	\end{equation} 
	We recall the definition of the \emph{characteristic (map)} or \emph{flow} $X$ associated to the vector field $\bu$, as the solution of
	\begin{equation*}
	\forall s, t \in \R_+, \forall x \in \Omega, \quad
	\left\{
	\begin{array}{rcl}
	\p_t X(t,s,x) &= &\bu(t,X(t,s,x))  \\
	X(s,s,x) &= &x. 
	\end{array}
	\right.
	\end{equation*} 
	The Cauchy-Lipschitz theory ensures that $X$ is well defined, and that for any $s,t \in \R^+, X(t,s,\cdot)$ is a homeomorphism from $\Omega$ onto itself, satisfying the composition principle
	\[ \forall r, s,t \in \R_+, \quad X(t,s,\cdot)\circ X(s,r,\cdot) = X(t,r,\cdot). \]
	In particular we have the relation $X(t,0,\cdot)^{-1} = X(0,t,\cdot)$. From now on, we use indifferently the following notations
	\[ \forall t \in R_+, \quad X(t) = X(t,\cdot) = X(t,0,\cdot), \quad X(-t) := X(0,t,\cdot).\]
	Let us enumerate a few classical properties of the flow. These are elementary consequences of Duhamel formula and Gronwall inequality.
	\begin{lemma}
		Let $\bu \in L^\infty(\R_+;W^{1,\infty}(\Omega))$ with $\bu|_{\p\Omega} \equiv \bzero.$ The associated characteristic map $X$ satisfies,
		\begin{equation}
		\forall t \in \R, \forall x,y \in \Omega, \quad|X(t,x) - X(t,y)| \leq e^{C|t|\|\nabla \bu\|_{L^\infty(0,t;L^\infty)}}|x-y|. \label{eq:char40}
		\end{equation}
	\end{lemma}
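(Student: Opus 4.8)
The plan is to run the standard Duhamel--Gronwall argument for the flow of a Lipschitz vector field; the only point demanding attention is that $\bu(t,\cdot)$ be Lipschitz for the \emph{Euclidean} distance on the possibly non-convex domain $\Omega$, and this is exactly where the homogeneous Dirichlet condition enters.

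First I would extend $\bu(t,\cdot)$ by $\bzero$ to the whole of $\R^d$. Since $\bu(t,\cdot)\in W^{1,\infty}(\Omega)$ and its trace on $\p\Omega$ vanishes, the extended field lies in $W^{1,\infty}(\R^d)$, hence is Lipschitz on $\R^d$ with a constant controlled by $C\|\nabla\bu(t,\cdot)\|_{L^\infty}$, the constant $C$ depending only on $\Omega$. In particular, for a.e.\ $t$ and all $a,b\in\Omega$,
\[ |\bu(t,a)-\bu(t,b)| \leq C\,\|\nabla\bu(t,\cdot)\|_{L^\infty}\,|a-b|. \]

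Next, for $t\ge 0$ I would write Duhamel's formula for the characteristic, $X(t,x)=x+\int_0^t\bu(\tau,X(\tau,x))\ud\tau$, subtract the same identity at the point $y$, and take norms. Since $X(\tau,x),X(\tau,y)\in\Omega$ for every $\tau$, the Lipschitz bound above gives
\[ |X(t,x)-X(t,y)| \leq |x-y| + C\,\|\nabla\bu\|_{L^\infty(0,t;L^\infty)}\int_0^t |X(\tau,x)-X(\tau,y)|\ud\tau, \]
and Gronwall's inequality then yields $|X(t,x)-X(t,y)|\leq e^{Ct\|\nabla\bu\|_{L^\infty(0,t;L^\infty)}}|x-y|$. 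For $t<0$ I would argue identically with the backward flow $X(t,\cdot)=X(0,|t|,\cdot)$, which solves the same kind of ODE (with the time-reversed field), obtaining the estimate with $|t|$ in place of $t$; together these cover all $t\in\R$.

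The only genuine obstacle is the first step: without the Dirichlet condition one would control $|\bu(t,a)-\bu(t,b)|$ only by the \emph{geodesic} distance in $\Omega$, and one would then have to invoke quasiconvexity of $\Omega$ (legitimate here since $\p\Omega$ is Lipschitz) to revert to the Euclidean distance; the zero extension circumvents this cleanly. Everything else is the routine Duhamel and Gronwall scheme already advertised before the statement.
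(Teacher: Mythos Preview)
Your proposal is correct and follows exactly the route the paper indicates: the lemma is stated there without proof, the authors merely noting that it is an ``elementary consequence of Duhamel formula and Gronwall inequality,'' which is precisely what you carry out. Your extra care about the Euclidean-versus-geodesic Lipschitz constant (handled via the zero extension afforded by the Dirichlet condition, or alternatively via quasiconvexity of the Lipschitz domain) is a legitimate point that the paper leaves implicit; either justification is fine and nothing in your argument needs changing.
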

	In particular $X(t)$ is bi-Lipschitz for any $t$. Recall that Liouville theorem ensures that if $\div\,\bu \equiv 0$ the jacobian determinant of $X(t)$ is identically equal to $1$ with respect to $t$. Besides, let us introduce a classical stability estimate on the characteristics.
	\begin{lemma} \label{lem:stabchar}
		Let $\bu_i \in L^\infty(\R_+;W^{1,\infty}(\Omega))$ with ${\bu_i}|_{\p\Omega} \equiv \bzero$ and $\div \, \bu_i \equiv 0$ for $i=1,2$. If $\Omega$ is bounded, the associated characteristic maps $X_i$ satisfy, for any $1\leq q\leq\infty$,
		\begin{equation} \label{eq:stabchar}
		\forall t \in \R_+, \quad \|X_1(t) - X_2(t)\|_{L^q} \leq t e^{Ct\|\nabla \bu_1\|_{L^\infty(0,t; L^\infty)}}\|\bu_1-\bu_2\|_{L^\infty(0,t;L^q)}.
		\end{equation}
		If $q=\infty$, the inequality holds true for unbounded $\Omega$.
	\end{lemma}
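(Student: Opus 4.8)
The plan is to reduce the estimate to a Gr\"onwall argument on the difference of the two flows. First I would fix $x \in \Omega$ and write, for both $i = 1,2$, the Duhamel identity
\[
X_i(t,x) = x + \int_0^t \bu_i\bigl(s, X_i(s,x)\bigr)\ud s,
\]
which is legitimate since each $\bu_i$ is bounded and Lipschitz in space, uniformly in time. Subtracting the two identities gives
\[
X_1(t,x) - X_2(t,x) = \int_0^t \Bigl(\bu_1\bigl(s,X_1(s,x)\bigr) - \bu_2\bigl(s,X_2(s,x)\bigr)\Bigr)\ud s.
\]
Then I would split the integrand by adding and subtracting $\bu_1(s,X_2(s,x))$: the term $\bu_1(s,X_1(s,x)) - \bu_1(s,X_2(s,x))$ is controlled by $\|\nabla\bu_1\|_{L^\infty(0,t;L^\infty)}\,|X_1(s,x) - X_2(s,x)|$ using that $\bu_1(s,\cdot)$ is Lipschitz, while the term $\bu_1(s,X_2(s,x)) - \bu_2(s,X_2(s,x))$ is a pointwise value of $\bu_1 - \bu_2$ at the point $X_2(s,x)$.

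The second step is to take $L^q(\Omega)$ norms in $x$. For the Lipschitz term this is immediate. For the source term I would use the change of variables $y = X_2(s,x)$: since $\div\,\bu_2 \equiv 0$, Liouville's theorem makes $X_2(s,\cdot)$ measure-preserving, so
\[
\Bigl\| (\bu_1 - \bu_2)\bigl(s, X_2(s,\cdot)\bigr) \Bigr\|_{L^q(\Omega)} = \|(\bu_1 - \bu_2)(s,\cdot)\|_{L^q(\Omega)} \le \|\bu_1 - \bu_2\|_{L^\infty(0,t;L^q)}.
\]
This is precisely where the incompressibility hypothesis is used, and it is also the one point where the argument differs between bounded and unbounded $\Omega$ at $q = \infty$: for $q = \infty$ one does not need the change of variables at all, since $\|(\bu_1-\bu_2)(s,X_2(s,\cdot))\|_{L^\infty} \le \|(\bu_1-\bu_2)(s,\cdot)\|_{L^\infty}$ holds trivially because $X_2(s,\cdot)$ maps $\Omega$ into $\Omega$ — hence the final sentence of the statement. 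For $1 \le q < \infty$, the change of variables requires $\Omega$ bounded (or at least finite measure) only insofar as we want the $L^q$ norm to be meaningful for the velocity fields themselves; the measure-preserving identity itself holds regardless.

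Combining, and writing $\phi(t) := \|X_1(t) - X_2(t)\|_{L^q(\Omega)}$, I obtain the integral inequality
\[
\phi(t) \le \|\nabla\bu_1\|_{L^\infty(0,t;L^\infty)} \int_0^t \phi(s)\ud s + t\,\|\bu_1 - \bu_2\|_{L^\infty(0,t;L^q)}.
\]
Applying the integral form of Gr\"onwall's lemma (with the nondecreasing-in-$t$ forcing term $t\,\|\bu_1-\bu_2\|_{L^\infty(0,t;L^q)}$ and constant coefficient $\|\nabla\bu_1\|_{L^\infty(0,t;L^\infty)}$) yields
\[
\phi(t) \le t\,\|\bu_1 - \bu_2\|_{L^\infty(0,t;L^q)}\,e^{t\|\nabla\bu_1\|_{L^\infty(0,t;L^\infty)}},
\]
which is the claimed bound up to the constant $C$ in the exponent. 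I do not expect a genuine obstacle here; the only subtlety worth flagging is the justification of the $L^q$-valued Minkowski/Fubini step (moving the norm inside the time integral), which is routine given the regularity of $\bu_i$, and the care needed so that every constant is independent of the data, which is automatic since the only constants that appear are absolute.
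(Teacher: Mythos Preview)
Your proposal is correct and follows essentially the same route as the paper: Duhamel formula for the flows, the same add-and-subtract splitting with $\bu_1(s,X_2(s,x))$, Liouville's theorem for the $L^q$ change of variables (with the $q=\infty$ case handled trivially), Minkowski to pass the norm inside the time integral, and Gr\"onwall to conclude. The paper's proof is identical in structure and detail.
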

	\begin{proof}[Proof] Let us consider $\Omega$ bounded. From Duhamel formula we write for any $t \in \R_+$ and $x\in\Omega$,
		\begin{align*}
			X_1(t,x) - X_2(t,x) &= \int_0^t\bu_1(\tau,X_1(\tau,x))-\bu_2(\tau,X_2(\tau,x)) \ud \tau \\
				& = \int_0^t \bu_1(\tau,X_1(\tau,x)) - \bu_1(\tau,X_2(\tau,x)) \ud \tau \\
					& \qquad + \int_0^t \bu_1(\tau,X_2(\tau,x))-\bu_2(\tau,X_2(\tau,x))\ud\tau.
		\end{align*}
		The Lipschitz regularity of $\bu_i$ and the Minkowski inequality provide, for any $1\leq q\leq \infty$,
		\begin{multline*}
			\|X_1(t)-X_2(t)\|_{L^q} \leq C\|\nabla\bu\|_{L^\infty(0,t;L^\infty)}\int_0^t \|X_1(\tau)-X_2(\tau)\|_{L^q} \ud \tau\\ + \int_0^t\|\bu_1(\tau,X_2(\tau))-\bu_2(\tau,X_2(\tau))\|_{L^q}\ud\tau.
		\end{multline*}
	As a consequence of	Liouville theorem, one has for $1\leq q<\infty$ and $\tau \in [0,t]$,
	\begin{equation*}
		\|\bu_1(\tau,X_2(\tau))-\bu_2(\tau,X_2(\tau))\|_{L^q} = \|\bu_1(\tau,\cdot)-\bu_2(\tau,\cdot))\|_{L^q}.
	\end{equation*}
	The case $q=\infty$ holds naturally true. Then we have
		\begin{multline*}
			\|X_1(t)-X_2(t)\|_{L^q} \leq C\|\nabla\bu_1\|_{L^\infty(0,t;L^\infty)} \int_0^t\|X_1(\tau)-X_2(\tau)\|_{L^q} \ud \tau + t\|\bu_1-\bu_2\|_{L^\infty(0,t;L^q)},
		\end{multline*}
		which yields (\ref{eq:stabchar}) by Gronwall inequality.
	\end{proof}
	
	The classical characteristics method provides the well-posedness of the transport equation.
	\begin{proposition} \label{prop:t}
		Let $\Omega$ be a Lipschitz domain of $\R^d$, not necessarily bounded. Let $\bu \in L^\infty(\R_+;W^{1,\infty}(\Omega))$ with $\bu|_{\p\Omega} \equiv \bzero$, and let $\rho_0 \in L^\infty(\Omega)$. There exists a unique $\rho$ in $L^\infty(\R_+;L^\infty(\Omega))$ satisfying (\ref{eq:t}), which is moreover the push-forward of $\rho_0$ by the characteristic $X$ of $\bu$, namely  
		\begin{equation*}
		\forall t \in \R_+, \quad \rho(t) = \rho_0 \circ X(-t).
		\end{equation*}
	\end{proposition}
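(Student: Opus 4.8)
The strategy is the classical method of characteristics, combined with the properties of the flow already established. The plan is to define the candidate solution by the push-forward formula $\rho(t) = \rho_0 \circ X(-t)$ and then verify in turn that it belongs to the right space, that it solves the transport equation in the weak sense, and that it is the only such solution.

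First I would check well-definedness and the $L^\infty$ bound. Since $\bu \in L^\infty(\R_+;W^{1,\infty}(\Omega))$ with $\bu|_{\p\Omega}\equiv\bzero$, the Cauchy–Lipschitz theory quoted above guarantees that $X(t,s,\cdot)$ is a homeomorphism of $\Omega$ onto itself for all $s,t$, so $X(-t) = X(0,t,\cdot)$ maps $\Omega$ into $\Omega$ and $\rho_0 \circ X(-t)$ is well defined as a measurable function on $\Omega$. Because $\div\,\bu \equiv 0$, Liouville's theorem gives that $X(-t)$ is measure-preserving, hence $\|\rho(t)\|_{L^\infty(\Omega)} = \|\rho_0 \circ X(-t)\|_{L^\infty(\Omega)} = \|\rho_0\|_{L^\infty(\Omega)}$ for every $t$, so $\rho \in L^\infty(\R_+;L^\infty(\Omega))$ with the natural bound. (Weak measurability in time is routine from continuity of $t\mapsto X(t,x)$.)

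Next I would verify that $\rho(t) = \rho_0\circ X(-t)$ solves \eqref{eq:t} in the weak sense. For smooth $\rho_0$ this is a direct computation: differentiating $X(0,t,\cdot)\circ X(t,0,\cdot) = \Id$ in $t$ and chain rule give $\p_t\big(\rho_0(X(-t))\big) + \bu(t,\cdot)\cdot\nabla\big(\rho_0(X(-t))\big) = 0$ pointwise, and one integrates against a test function, using $\div\,\bu = 0$ and $\bu|_{\p\Omega}=0$ to discard boundary terms in the integration by parts. For general $\rho_0 \in L^\infty(\Omega)$ I would approximate by smooth functions $\rho_0^n \to \rho_0$ (say in $L^1_{\loc}$ or $L^2_{\loc}$), note that $\rho_0^n \circ X(-t) \to \rho_0 \circ X(-t)$ in the same local topology uniformly on compact time intervals by the measure-preserving change of variables, and pass to the limit in the weak formulation; the uniform $L^\infty$ bounds make all the integrals converge.

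Finally, uniqueness. Given two solutions $\rho_1,\rho_2 \in L^\infty(\R_+;L^\infty(\Omega))$ with the same datum, set $\sigma = \rho_1 - \rho_2$, which is an $L^\infty$ weak solution of the transport equation with zero initial data. Since $\bu$ is Lipschitz and divergence-free with $\bu|_{\p\Omega}=0$, the transport operator is well-behaved and one concludes $\sigma \equiv 0$ — concretely, test the equation for $\sigma$ against $\sigma$ itself (or use the renormalization property of DiPerna–Lions, applicable here because $\bu \in W^{1,\infty}$) to get $\tfrac{\rd}{\rd t}\|\sigma(t)\|_{L^2(\Omega\cap B_R)}^2 \lesssim \|\nabla\bu\|_{L^\infty}\|\sigma(t)\|_{L^2}^2$ plus controlled boundary/flux terms, then Gronwall from $\sigma(0)=0$. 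Alternatively, uniqueness follows immediately from the fact that any weak solution must be constant along characteristics, which forces $\rho(t) = \rho_0\circ X(-t)$. The main obstacle is the passage from smooth to merely $L^\infty$ data in the weak formulation and the rigorous justification of the renormalization/energy identity up to the boundary; since $\bu$ is genuinely Lipschitz (not just $W^{1,1}$) and vanishes on $\p\Omega$, this is standard but must be stated with care, especially when $\Omega$ is unbounded where one works with local norms.
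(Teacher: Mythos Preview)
The paper does not actually prove this proposition; it is stated without proof, introduced by the sentence ``The classical characteristics method provides the well-posedness of the transport equation.'' Your outline is precisely the standard characteristics argument the paper is invoking, so there is nothing to compare.

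One small remark: the proposition as stated does \emph{not} assume $\div\,\bu = 0$, so your appeal to Liouville's theorem for the $L^\infty$ bound is superfluous --- the identity $\|\rho_0\circ X(-t)\|_{L^\infty} = \|\rho_0\|_{L^\infty}$ follows already from the bi-Lipschitz property of $X(-t)$ (null sets go to null sets). Likewise, the weak formulation and the uniqueness argument only need $\div\,\bu \in L^\infty$, which is automatic from $\bu \in W^{1,\infty}$. Since every application of the proposition in the paper has $\bu$ divergence-free anyway, this is harmless, but it is worth noting that your sketch slightly over-assumes relative to the stated hypotheses.
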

	In particular the $L^q$ norm of the solution $\rho$ is constant in time, for any $1\leq q\leq\infty$. Besides, we state the following estimate of the evolution of the difference of two solutions of (\ref{eq:t}) associated to distinct velocity fields. 
	\begin{proposition} \label{prop:stabb}
		Let $\bu_i \in L^\infty(\R_+;W^{1,\infty}(\Omega))$, with ${\bu_i}|_{\p\Omega} \equiv \bzero$ and $\div \,\bu_i \equiv 0$, for $i=1,2$. Let $\rho_0 \in L^\infty(\Omega)$ and $\rho_i$ be the solutions of (\ref{eq:t}) associated to $\bu_i$ with initial datum $\rho_0$. For any $1 < q < \infty$ there exists $\bar{T}(\|\nabla \bu_i\|_{L^\infty})>0$ and $C(\Omega,q)>0$ such that for any $T \in [0,\bar{T}]$,
		\begin{equation*} % \label{eq:stabb}
		 \|\rho_1-\rho_2\|_{L^\infty(0,T;W^{-1,q})} \leq C\|\rho_0\|_{L^\infty}Te^{CT\|\nabla\bu_1\|_{L^\infty(0,T;L^\infty)}}\|\bu_1-\bu_2\|_{L^\infty(0,T;L^q)}.	
		\end{equation*}
	\end{proposition}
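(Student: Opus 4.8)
The plan is to combine the push‑forward representation $\rho_i(t)=\rho_0\circ X_i(-t)$ from Proposition \ref{prop:t} with a duality argument that turns the negative Sobolev norm into a pairing against test functions $\vphi\in W^{1,q'}_0(\Omega)$, where $q'=q/(q-1)$ is the conjugate exponent. First I would fix a smooth $\vphi$ and use that each $X_i(t,\cdot)$ is a measure‑preserving homeomorphism of $\Omega$ (Liouville's theorem, since $\div\,\bu_i\equiv\bzero$): the changes of variables $x=X_1(t,y)$ and $x=X_2(t,y)$ give
\[
\int_\Omega\bigl(\rho_1(t,x)-\rho_2(t,x)\bigr)\vphi(x)\ud x=\int_\Omega\rho_0(y)\bigl(\vphi(X_1(t,y))-\vphi(X_2(t,y))\bigr)\ud y.
\]
Writing $\Phi_s(t,y):=(1-s)X_2(t,y)+sX_1(t,y)$ and using the fundamental theorem of calculus along the segment $s\mapsto\Phi_s(t,y)$, the integrand equals $\int_0^1(\nabla\vphi)(\Phi_s(t,y))\cdot\bigl(X_1(t,y)-X_2(t,y)\bigr)\ud s$, so Hölder's inequality in $y$ yields
\[
\Bigl|\int_\Omega\bigl(\rho_1(t)-\rho_2(t)\bigr)\vphi\Bigr|\le\|\rho_0\|_{L^\infty}\,\|X_1(t)-X_2(t)\|_{L^q}\int_0^1\bigl\|(\nabla\vphi)\circ\Phi_s(t,\cdot)\bigr\|_{L^{q'}}\ud s,
\]
and the factor $\|X_1(t)-X_2(t)\|_{L^q}$ is bounded by $t\,e^{Ct\|\nabla\bu_1\|_{L^\infty(0,t;L^\infty)}}\|\bu_1-\bu_2\|_{L^\infty(0,t;L^q)}$ by Lemma \ref{lem:stabchar}.

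It then remains to absorb $\int_0^1\|(\nabla\vphi)\circ\Phi_s(t,\cdot)\|_{L^{q'}}\ud s$ into $C(\Omega,q)\|\nabla\vphi\|_{L^{q'}(\Omega)}$ by a change of variables $z=\Phi_s(t,y)$, and this is where the small‑time restriction enters. From $X_i(t,y)=y+\int_0^t\bu_i(\tau,X_i(\tau,y))\ud\tau$ together with the Lipschitz bound (\ref{eq:char40}) for $X_i(\tau,\cdot)$, the map $X_i(t,\cdot)-\Id$ has Lipschitz constant $L_i(t)$ with $L_i(t)\to 0$ as $t\to 0$, quantitatively controlled by $\|\nabla\bu_i\|_{L^\infty(0,t;L^\infty)}$; hence there is $\bar T=\bar T(\|\nabla\bu_1\|_{L^\infty},\|\nabla\bu_2\|_{L^\infty})$ so that for $t\le\bar T$ the map $\Phi_s(t,\cdot)=\Id+\bigl[(1-s)(X_2(t,\cdot)-\Id)+s(X_1(t,\cdot)-\Id)\bigr]$ differs from the identity by a map with Lipschitz constant $\le\tfrac12$, uniformly in $s\in[0,1]$. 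Such a near‑identity perturbation is an injective bi‑Lipschitz map of $\Omega$ onto $\Phi_s(t,\Omega)\subset\R^d$ whose Jacobian, and that of its inverse, are bounded above and below by universal constants. Extending $\vphi$ by zero to $\bar\vphi\in W^{1,q'}(\R^d)$ and applying the change of variables formula for bi‑Lipschitz maps,
\[
\bigl\|(\nabla\vphi)\circ\Phi_s(t,\cdot)\bigr\|_{L^{q'}(\Omega)}^{q'}=\int_{\Phi_s(t,\Omega)}|\nabla\bar\vphi(z)|^{q'}\,\bigl|J\bigl(\Phi_s(t,\cdot)^{-1}\bigr)(z)\bigr|\ud z\le C\,\|\nabla\bar\vphi\|_{L^{q'}(\R^d)}^{q'}=C\,\|\nabla\vphi\|_{L^{q'}(\Omega)}^{q'}.
\]

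Plugging this in gives, for smooth $\vphi$, the bound $\bigl|\langle\rho_1(t)-\rho_2(t),\vphi\rangle\bigr|\le C(\Omega,q)\|\rho_0\|_{L^\infty}\,t\,e^{Ct\|\nabla\bu_1\|_{L^\infty(0,t;L^\infty)}}\|\bu_1-\bu_2\|_{L^\infty(0,t;L^q)}\,\|\nabla\vphi\|_{L^{q'}}$; since $\rho_1-\rho_2\in L^\infty(\Omega)\subset L^q(\Omega)\subset W^{-1,q}(\Omega)$ the left‑hand side depends continuously on $\vphi$ in $W^{1,q'}_0(\Omega)$, so a density argument extends the inequality to all $\vphi\in W^{1,q'}_0(\Omega)$. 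As $\Omega$ is bounded, $\|\nabla\cdot\|_{L^{q'}}$ is an equivalent norm on $W^{1,q'}_0(\Omega)$ by Poincaré, so taking the supremum over $\vphi$ and then over $t\in[0,T]$ for $T\le\bar T$ (using $t\le T$ and the monotonicity in time of the norms on the right) produces the claimed estimate. I expect the only genuinely delicate point to be the change of variables in the third display: because $\vphi$ is merely $W^{1,q'}$ and not Lipschitz, one cannot estimate $\vphi\circ X_1-\vphi\circ X_2$ by $\|X_1-X_2\|$ times a Lipschitz constant of $\vphi$, and must instead transport $\nabla\vphi$ by the interpolated map $\Phi_s$, which is not a flow map and in general does not preserve $\Omega$; controlling the Jacobian of $\Phi_s$ forces it to be close to the identity, which is precisely why the estimate is only local in time.
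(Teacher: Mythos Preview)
Your proposal is correct and follows essentially the same route as the paper: duality against $\vphi\in W^{1,q'}_0$, push-forward plus Liouville to move the difference onto $\vphi\circ X_1-\vphi\circ X_2$, the fundamental theorem of calculus along the interpolated map $\Phi_s=(1-s)X_2+sX_1$ (the paper's $X^\theta$), Hölder, Lemma~\ref{lem:stabchar} for $\|X_1-X_2\|_{L^q}$, and the small-time bi-Lipschitz property of $\Phi_s$ to control $\|(\nabla\vphi)\circ\Phi_s\|_{L^{q'}}$. Your extra care in extending $\vphi$ by zero to $\R^d$ before the change of variables (since $\Phi_s$ need not preserve $\Omega$) and in invoking density and Poincaré at the end are points the paper leaves implicit.
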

	\begin{proof}
		Let $t \in \R_+$ and $\vphi \in C^\infty_\rc(\Omega)$. Since the vector fields $\bu_i$ are divergence-free, the Liouville theorem ensures the following change of variable,
		\begin{align*}
			I_\vphi &:= \int_\Omega \big(\rho_1(t,x)-\rho_2(t,x)\big)\vphi(x) \ud x  \\ &= \int_\Omega \big( \rho_0(X_1(-t,x)) - \rho_0(X_2(-t,x))\big) \vphi(x) \ud x\\
				&= \int_\Omega \rho_0(x) \big( \vphi(X_1(t,x)) - \vphi(X_2(t,x))\big) \ud x.
		\end{align*}
		Since $\vphi$ is smooth, we can write
		\begin{equation*}
			I_\vphi =  \int_\Omega \rho_0(x) \big( X_1(t,x)-X_2(t,x) \big) \cdot \int_0^1 \nabla \vphi (X^\theta(t,x)) \ud \theta \ud x,
		\end{equation*}
		where we set $X^\theta(t,x) := \theta X_1(t,x) + (1-\theta)X_2(t,x)$. Then, Hölder's inequality provides
		\begin{equation} \label{eq:stabbound}
			|I_\vphi| \leq \|\rho_0\|_{L^\infty}\|X_1(t)-X_2(t)\|_{L^q} \int_0^1 \|\nabla\vphi(X^\theta(t))\|_{L^{q'}} \ud\theta.
		\end{equation}
		Let us show that $X^\theta(t)$ is bi-Lipschitz from $\Omega$ onto its range. Consider the derivative of the Duhamel formula satisfied by $X_i$,
		\begin{equation*}
			\nabla X_i(t,\cdot) - I_d = \int_0^t \nabla \bu_i(\tau,X_i(\tau,\cdot))\cdot\nabla X_i(\tau,\cdot) \ud \tau
		\end{equation*}
		and deduce thanks to inequality (\ref{eq:char40}) the uniform estimate
		\begin{equation*}
			\| \nabla X_i(t) - I_d\|_{L^\infty} \leq C\|\nabla\bu_i\|_{L^\infty(0,t;L^\infty)}\left(e^{Ct\|\nabla\bu_i\|_{L^\infty(0,t;L^\infty)}}-1\right).
		\end{equation*}		
		Therefore for some arbitrary constant $C > 1$ there exists $\bar{T}(\|\nabla\bu_i\|_{L^\infty}) > 0$ such that
		\begin{equation*}
			\forall t \in [0,\bar{T}], \quad C^{-1} \leq \det \, \nabla X_i(t) \leq C,
		\end{equation*}
		and such that the Lipschitz constants of $X_i(t)$ are uniformly bounded with respect to $t$, by a constant smaller than $1$, as follows,
		\begin{equation*}
			L_ {\bar{T}} := \max_i \sup_{t\in[0,\bar{T}]} \Lip (X_i(t)-\Id) < 1.
		\end{equation*}
		The latter inequality ensures the injectivity of $X^\theta(t)$, since for any $x,y \in \Omega$, the equality $X^\theta(t,x) = X^\theta(t,y)$ is equivalent to
		\begin{multline*}
			x-y = \theta \Big( X_1(t,y)-y - \big(X_1(t,x)-x\big)\Big) + (1-\theta)\Big(X_2(t,y)-y-\big(X_2(t,x)-x\big)\Big),
		\end{multline*}
		and implies
		\begin{equation*}
			|x-y| \leq L_{\bar{T}} |x-y|,
		\end{equation*}
		so that $x=y$ since $L_{\bar{T}} < 1$. In the end we have proved that $X^\theta(t)$ is bi-Lipschitz for any $\theta\in[0,1]$ and $t \in [0,\bar{T}]$, with a uniform bound on its jacobian determinant, independent of $\theta$ and $t$. Therefore we have for any $1<q<\infty$,
		\begin{equation} \label{eq:hm}
			\|\nabla\vphi(X^\theta(t))\|_{L^{q'}} \leq C_q\|\nabla \vphi\|_{L^q}.
		\end{equation}
		Combining (\ref{eq:stabbound}) and (\ref{eq:hm}) leads to
		\begin{equation*}
			\forall t \in [0,\bar{T}], \quad \int_\Omega(\rho_1(t,x)-\rho_2(t,x))\vphi(x) \ud x \leq C\|\rho_0\|_{L^\infty}\|X_1(t)-X_2(t)\|_{L^q}\|\vphi\|_{W^{1,q'}}.
		\end{equation*}
		Plugging the stability estimate (\ref{eq:stabchar}) and taking the supremum over the test functions provides the following bound on the negative Sobolev norm,
		\begin{equation*}
			\forall t \in [0,\bar{T}], \quad \|\rho_1(t)-\rho_2(t)\|_{W^{-1,q}} \leq C\|\rho_0\|_{L^\infty}t e^{Ct\|\nabla\bu_1\|_{L^\infty(0,t;L^\infty)}}\|\bu_1-\bu_2\|_{L^\infty(0,t;L^q)}.
		\end{equation*}
		It remains to consider the supremum over $t \in [0,T]$ for any $T \leq \bar{T}$ to get the result.
	\end{proof}

\subsection{Proof of Theorem \ref{thm:1}}
	The strategy of the proof is inspired from the one adopted by Mecherbet in \cite{mecherbet} for the space domain $\R^3$. In \cite{mecherbet} the author solves successively Stokes and transport problems, providing a contracting sequence of velocity fields and density profiles. Here the contracting property is obtained by combination of the Stokes estimate from Theorem \ref{thm:galdi} and the stability estimate for the transport from Proposition \ref{prop:stabb}. Both interplay in the case of the whole space thanks to the Green kernel, also called \emph{Oseen tensor}, representation of the Stokes solution and the stability estimates formulated with the Wasserstein distance. Since we work in general open bounded domains, we do not use Green kernels but only rely on the variational estimates. We also replace the Wasserstein distance, handled by Mecherbet, by negative Sobolev norms. Both are known to be related, see \cite[subsection 5.5.2]{santambrogio}, and in our case present the same asset to allow the statement of stability estimates without any derivability assumption concerning the density.
	
	\paragraph{Local existence :} Set $\rho^0 \equiv \rho_0$ in $L^\infty(\R_+; L^\infty(\Omega))$. Theorem \ref{thm:galdi} and Proposition \ref{prop:t} allow to define the following  sequences by induction on $N \in \N$,
	\[ \rho^N \in L^\infty(\R_+;L^\infty(\Omega)), \quad \bu^N \in L^\infty(\R_+;W^{1,\infty}(\Omega)), \]
	satisfying for any $N \in \N$ the Stokes problem
	\begin{equation} \label{eq:TN}
	\left\{
	\begin{array}{rcll}
		-\Delta\bu^N + \nabla p^N &=& -\rho^N\be_z &\text{in } \R_+\times \Omega, \\
		\div\,\bu^N &=& 0 &\text{in } \R_+\times\Omega,\\
		\bu^N &=& \bzero &\text{in } \R_+\times\p\Omega,
	\end{array}
	\right.
	\end{equation}
	and the transport equation
	\begin{equation} \label{eq:SN}
		\left\{
		\begin{array}{rcll}
		\p_t \rho^{N+1} + \bu^N\cdot\nabla\rho^{N+1} &=& 0 &\text{in } \R_+\times\Omega, \\
		\rho^{N+1}|_{t=0} &=& \rho_0 &\text{in } \Omega.
		\end{array}
		\right.
	\end{equation}
	Let us denote $B := C \|\rho_0\|_{L^\infty}$ with adjustable constant. Since $\rho^N$ is the push-forward of $\rho_0$ by the flow of $\bu^{N-1}$, we have the uniform bound
	\begin{equation*}
		\forall N, \quad \|\rho^N\|_{L^\infty(\R_+;L^\infty)} = \|\rho_0\|_{L^\infty} \leq B.
	\end{equation*}
	Using althemore Stokes estimate (\ref{eq:sestub}), we obtain
	\begin{equation*}
		\forall N, \quad \|\bu^N\|_{L^\infty(\R_+;W^{1,\infty})} \leq C\|\rho^N\|_{L^\infty(\R_+,L^\infty)} \leq B.
	\end{equation*}
	Hence $\rho^N, \bu^N$ and $\nabla\bu^N$ converge in $\rw^*-L^\infty(\R_+\times\Omega)$ up to the extraction of subsequences. Besides, estimates from Proposition \ref{prop:stabb} and Theorem \ref{thm:galdi} ensure that there exists $\bar{T}(\|\rho_0\|_{L^\infty}) >0$ such that
	\begin{align}
		\forall T \in [0,\bar{T}], \quad \|\rho^{N+1}-\rho^N\|_{L^\infty(0,T;H^{-1})} 
			& \leq BTe^{BT} \|\bu^N-\bu^{N-1}\|_{L^\infty(0,T;H^1)} \notag \\
			& \leq BTe^{BT} \|\rho^N-\rho^{N-1}\|_{L^\infty(0,T;H^{-1})}. \label{eq:loop}
	\end{align}
	We see that for $T > 0$ small enough we have $BTe^{BT} < 1$ so that $(\rho^N)_N$ is a Cauchy sequence in $L^\infty(0,T;H^{-1}(\Omega))$. As a consequence of the Stokes estimate (\ref{eq:sest}), we have that $(\bu^N)_N$ is also a Cauchy sequence, in $L^\infty(0,T;H^1(\Omega))$. Its limit belongs to $L^\infty(0,T;W^{1,\infty}(\Omega))$ since it converges for the $\weak^*$ topology. In particular, $\bu^N$ converges in $L^1(0,T;W^{1,1}(\Omega))$, which together with the $\weak^*$ convergence of $(\rho^N)_N$ allows to pass to the limit in the weak formulations of both (\ref{eq:TN}) and (\ref{eq:SN}). Therefore the limit $(\rho,\bu)$ satisfies (\ref{eq:st}) on a short time, with regularity
	\begin{equation*}
		L^\infty(0,T;L^\infty(\Omega)) \times L^\infty(0,T;W^{1,\infty}(\Omega)).
	\end{equation*}
	
	\paragraph{Local uniqueness :} Let $(\rho_i,\bu_i)$ be two such solutions of (\ref{eq:st}). The estimate (\ref{eq:loop}) adapts in
	\begin{equation*}
		\|\rho_1-\rho_2\|_{L^\infty(0,T;H^{-1})} \leq BTe^{BT}\|\rho_1-\rho_2\|_{L^\infty(0,T;H^{-1})},
	\end{equation*}
	up to the choice of a smaller $T>0$. We deduce that $\rho_1 = \rho_2$ on $[0,T]$, and that $\bu_1 = \bu_2$ thanks to the Stokes estimate. 
	
	\paragraph{Globality :} By existence and uniqueness of a solution to (\ref{eq:st}) locally in time, we know that there exists a unique maximal solution $(\rho,\bu)$ on some interval $[0,T^*)$ with $T^* \in [0,\infty]$. Remark that $T^*$ depends only on $\|\rho_0\|_{L^\infty}$. Since $\|\rho\|_{L^\infty} \equiv \|\rho_0\|_{L^\infty}$, a classical continuation argument implies that $T^*= \infty$, which proves that the solution is global.
	
	\subsection{Stability estimate for the system in a bounded domain}
		We prove a stability estimate for the Stokes-transport system, inherited from Proposition \ref{prop:stabb}.
	\begin{proposition} \label{prop:stabsys}
		Let $\rho_{0,i} \in L^\infty(\Omega)$ and set $\rho_i$ the solution of (\ref{eq:st}) with initial datum $\rho_{0,i}$ for $i = 1,2$. For any $1 < q < \infty$ there exists $C(\Omega,q, \|\rho_{0,i}\|_{L^\infty}) > 0$ such that
		\begin{equation} \label{eq:stabst}
			\forall T \in \R_+, \quad \|\rho_1-\rho_2\|_{L^\infty(0,T;W^{-1,q})} \leq Ce^{CT} \|\rho_{0,1}-\rho_{0,2}\|_{W^{-1,q}}.
		\end{equation}
	\end{proposition}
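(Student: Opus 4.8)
The plan is to adapt the local-in-time contraction argument already used in the proof of Theorem~\ref{thm:1} so that it yields a Lipschitz dependence of the solution on the initial datum, and then to iterate this local estimate over $[0,T]$. First I would fix $q \in (1,\infty)$ and set $B := C\max_i\|\rho_{0,i}\|_{L^\infty}$ with the usual adjustable constant; since both solutions satisfy $\|\rho_i(t)\|_{L^\infty} \equiv \|\rho_{0,i}\|_{L^\infty}$ and the Stokes estimate (\ref{eq:sestub}) gives $\|\bu_i\|_{L^\infty(\R_+;W^{1,\infty})} \leq B$, all the constants appearing below can be taken uniform in time, depending only on $\Omega$, $q$ and $\|\rho_{0,i}\|_{L^\infty}$. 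In particular there is a $\bar T = \bar T(B) > 0$, the same as in Proposition~\ref{prop:stabb}, on which the characteristic maps $X_i$ and the interpolants $X^\theta$ are bi-Lipschitz with uniformly controlled Jacobians.

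Next I would run the two-parameter version of the estimate leading to (\ref{eq:stabbound}). Writing $I_\vphi := \int_\Omega(\rho_1(t)-\rho_2(t))\vphi$ for $\vphi \in C^\infty_\rc(\Omega)$ and using that $\rho_i(t) = \rho_{0,i}\circ X_i(-t)$ together with Liouville's theorem, one splits
\begin{equation*}
	I_\vphi = \int_\Omega (\rho_{0,1}-\rho_{0,2})\,\vphi(X_1(t,\cdot)) + \int_\Omega \rho_{0,2}\big(\vphi(X_1(t,\cdot)) - \vphi(X_2(t,\cdot))\big).
\end{equation*}
The second term is handled exactly as in Proposition~\ref{prop:stabb}: bounding it by $\|\rho_{0,2}\|_{L^\infty}\|X_1(t)-X_2(t)\|_{L^q}\|\vphi\|_{W^{1,q'}}$ and then invoking the characteristics stability (\ref{eq:stabchar}), which controls $\|X_1(t)-X_2(t)\|_{L^q}$ by $te^{Ct\|\nabla\bu_1\|_{L^\infty}}\|\bu_1-\bu_2\|_{L^\infty(0,t;L^q)}$; by the Stokes estimate (\ref{eq:sestd}) applied to the difference of the two Stokes systems with source $-(\rho_1-\rho_2)\be_z$, this is $\lesssim Bte^{Bt}\|\rho_1-\rho_2\|_{L^\infty(0,t;W^{-1,q})}$. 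The first term is the genuinely new piece: since $X_1(t,\cdot)$ is bi-Lipschitz with Jacobian bounded above and below on $[0,\bar T]$, the change of variables $y = X_1(t,x)$ gives $\|\vphi\circ X_1(t)\|_{W^{1,q'}} \leq C_q\|\vphi\|_{W^{1,q'}}$ exactly as in (\ref{eq:hm}), hence this term is $\leq C\|\rho_{0,1}-\rho_{0,2}\|_{W^{-1,q}}\|\vphi\|_{W^{1,q'}}$. Taking the supremum over $\vphi$ with $\|\vphi\|_{W^{1,q'}} \leq 1$ and over $t \in [0,T]$ then yields, for $T \leq \bar T$,
\begin{equation*}
	\|\rho_1-\rho_2\|_{L^\infty(0,T;W^{-1,q})} \leq C\|\rho_{0,1}-\rho_{0,2}\|_{W^{-1,q}} + CBTe^{BT}\|\rho_1-\rho_2\|_{L^\infty(0,T;W^{-1,q})}.
\end{equation*}
Choosing $T_0 \in (0,\bar T]$ small enough that $CBT_0e^{BT_0} \leq \tfrac12$ — a threshold depending only on $B$ — absorbs the last term and gives $\|\rho_1-\rho_2\|_{L^\infty(0,T_0;W^{-1,q})} \leq 2C\|\rho_{0,1}-\rho_{0,2}\|_{W^{-1,q}}$.

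Finally I would iterate. Because $\|\rho_i(t)\|_{L^\infty}$ is conserved, the same step length $T_0$ works on every subinterval $[kT_0,(k+1)T_0]$ with $\rho_i(kT_0)$ playing the role of the initial datum; applying the local estimate on each of the $\lceil T/T_0\rceil$ successive intervals and composing the bounds produces $\|\rho_1-\rho_2\|_{L^\infty(0,T;W^{-1,q})} \leq (2C)^{\lceil T/T_0\rceil}\|\rho_{0,1}-\rho_{0,2}\|_{W^{-1,q}}$, and since $(2C)^{\lceil T/T_0\rceil} \leq e^{CT}$ after renaming the constant, this is precisely (\ref{eq:stabst}). The main obstacle I anticipate is bookkeeping rather than conceptual: one must make sure that $\bar T$, $T_0$ and all intermediate constants genuinely depend only on $\|\rho_{0,i}\|_{L^\infty}$ (through $B$) and not on $t$ or on the subinterval index, which is what makes the geometric iteration legitimate and the $e^{CT}$ growth honest; this is exactly where the time-independence of the $L^\infty$ norm of $\rho_i$ and of the $W^{1,\infty}$ norm of $\bu_i$ is used, together with the uniform bi-Lipschitz bound on $X_1(t)$ on $[0,\bar T]$ that was already established in the proof of Proposition~\ref{prop:stabb}.
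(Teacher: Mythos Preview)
Your proposal is correct and follows essentially the same approach as the paper: the paper introduces an auxiliary density $\rho_{1,2}$ (solving the transport equation with initial datum $\rho_{0,1}$ and velocity $\bu_2$) to effect the triangle inequality $\rho_1-\rho_2 = (\rho_1-\rho_{1,2})+(\rho_{1,2}-\rho_2)$, whereas you perform the equivalent splitting directly on $I_\vphi$ after the change of variables, with the indices $1$ and $2$ swapped in the two pieces; both routes lead to the same local estimate $\|\rho_1-\rho_2\|_{L^\infty(0,T;W^{-1,q})} \leq BTe^{BT}\|\rho_1-\rho_2\|_{L^\infty(0,T;W^{-1,q})} + Ce^{BT}\|\rho_{0,1}-\rho_{0,2}\|_{W^{-1,q}}$, followed by absorption and iteration on intervals of fixed length depending only on $B$.
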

	
	\begin{proof}
		Let us set $\bu_i$ the velocity fields associated to $\rho_i$ for $i=1,2$. Set $\rho_{1,2}$ the solution of (\ref{eq:t}) with initial datum $\rho_{0,1}$ and vector field $\bu_2$. Hence, consider the triangular inequality
		\begin{equation*}
			\forall T \in \R_+, \quad \|\rho_1-\rho_2\|_{L^\infty(0,T;W^{-1,q})} \leq \underbrace{\|\rho_1-\rho_{1,2}\|_{L^\infty(0,T;W^{-1,q})}}_{I_1} + \underbrace{\|\rho_{1,2}-\rho_2\|_{L^\infty(0,T;W^{-1,q})}}_{I_2}
		\end{equation*}
		From Proposition \ref{prop:stabb} and estimate (\ref{eq:sestub}) we know that there exists some $\bar{T}(\|\rho_{0,1}\|_{L^\infty}) > 0$ and $C(\Omega,q)>0$ such that
		\begin{equation*}
			\forall T \in [0,\bar{T}], \quad I_1 \leq C\|\rho_{0,1}\|_{L^\infty}e^{CT\|\nabla\bu_1\|_{L^\infty(0,T;L^\infty)}}\|\bu_1-\bu_2\|_{L^\infty(0,T;L^q)}.
		\end{equation*}
		Let us denote $B := C\max_i \|\rho_{0,i}\|_{L^\infty}$. Stokes estimates (\ref{eq:sestd}) and (\ref{eq:sestub}) respectively provide here 
		\begin{equation*}
			\|\bu_1-\bu_2\|_{L^\infty(0,T;L^q)} \leq C \|\rho_1-\rho_2\|_{L^\infty(0,T;W^{-1,q})}, \qquad \|\nabla \bu_1\|_{L^\infty(0,T;L^\infty)} \leq B,
		\end{equation*}
		which yields
		\begin{equation*}
			\forall T \in [0,\bar{T}], \quad I_1 \leq BTe^{BT}\|\rho_1-\rho_2\|_{L^\infty(0,T;W^{-1,q})}.
		\end{equation*}
		To bound $I_2$ let us apply again Liouville theorem for any $t \in \R_+$ and $\vphi \in C^\infty_\rc(\Omega)$, to get
		\begin{equation*}
			\int_\Omega\big(\rho_{1,2}(t,x) -\rho_2(t,x)\big) \vphi(x)\ud x = \int_\Omega (\rho_{0,1}(x)-\rho_{0,2}(x))\vphi(X_2(t,x))\ud x.
		\end{equation*}
		Now we have, by definition of the Sobolev norm,
		\begin{equation*}
			\int_\Omega\big(\rho_{1,2}(t,x) -\rho_2(t,x)\big) \vphi(x)\ud x \leq \|\rho_{0,1}-\rho_{0,2}\|_{W^{-1,q}} \| \vphi(X_2(t))\|_{W^{1,q'}}.
		\end{equation*}
		From estimates (\ref{eq:char40}) and (\ref{eq:sestub}) together with the bound $\|\nabla X_i\|_{L^\infty(0,t;L^\infty)} \leq Ce^{Bt}$ provided by Lemma \ref{lem:stabchar}, it follows
		\begin{equation*}
			\|\vphi(X_2(t))\|_{W^{1,q'}} \leq Ce^{Bt} \|\vphi\|_{W^{1,q'}}.
		\end{equation*}
		Passing to the supremum over the test functions gives
		\begin{equation*}
			I_2 \leq Ce^{BT} \|\rho_{0,1}-\rho_{0,2}\|_{W^{-1,q}}.
		\end{equation*}
		In the end we have shown that
		\begin{multline*}
			\forall T \in [0,\bar{T}],\quad	\|\rho_1-\rho_2\|_{L^\infty(0,T;W^{-1,q})} \leq BTe^{BT} \|\rho_1-\rho_2\|_{L^\infty(0,T;W^{-1,q})} \\ + Ce^{BT} \|\rho_{0,1}-\rho_{0,2}\|_{L^\infty(0,T;W^{-1,q})}.
		\end{multline*}
		Up to the choice of a small enough $\bar{T} > 0$, we have
		\begin{equation*}
			\forall T\in[0,\bar{T}],\quad \|\rho_1-\rho_2\|_{L^\infty(0,T;W^{-1,q})} \leq \underbrace{\tfrac{ Ce^{B\bar{T}}}{1- B\bar{T}e^{B\bar{T}}}}_{\bar{C}} \|\rho_{0,1}-\rho_{0,2}\|_{W^{-1,q}},
		\end{equation*}
		Notice that the choice of $\bar{T}$ depends only on $B$, and recall that $\|\rho_i(t)\|_{L^\infty} = \|\rho_{0,i}\|_{L^\infty}$ for any $t \in \R_+$. Therefore, one obtains by induction
		\begin{equation*}
			\forall T \in \R_+, \quad \|\rho_1-\rho_2\|_{L^\infty(0,T;W^{-1,q})} \leq \bar{C}^{\lceil T/\bar{T} \rceil} \|\rho_{0,1}-\rho_{0,2}\|_{W^{-1,q}}, 
		\end{equation*}
		which also writes (\ref{eq:stabst}).
	\end{proof}

\section{Well-posedness of the system in the infinite strip} \label{sec:unbounded}
	
		In this section, $\Omega$ stands for the \emph{infinite strip} $\R\times(0,1)$. We denote by $(\be_x,\be_z)$ the canonical base of $\R^2$ in which $\bu$ has coordinates $(u_1,u_2)$. 
		
		Regarding our problem, the transport theory does not depend on the nature of the domain, and the related results presented in the previous section are still valid in the strip. The main difference lies in the tools and methods required to solve the Stokes equation in $\Omega$. In particular, we state that this equation is still well-posed for $L^\infty$ data, with $W^{1,\infty}$ solution. To do so, we consider \emph{Kato spaces}, also known as \emph{uniformly local Sobolev spaces}. This framework allows to consider uniformly bounded densities $\rho$ and non globally integrable velocity fields $\bu$, having infinite energy $\|\bu\|_{H^1}$, but admitting locally a finite energy, uniformly bounded with respect to any compact stallion subdomain  of the strip. We first solve Stokes equation in a $L^2$ framework, then recover elliptic regularity and use Sobolev injection to prove its well-posedness for bounded data. Even if it requires known methods, we did not find the precise proof of this latter result in the uniformly local framework.
		
		The Subsection \ref{sbsec:Stokesstrip} is dedicated to the statements and proofs related to the Stokes equation. In particular we introduce Kato spaces in paragraph \ref{sbsbsec:spaces}, then discuss the flux condition in paragraph \ref{sbsbsec:flux} and finally prove the related well-posedness theorems in paragraph \ref{sbsbsec:wp}. The Subsection \ref{sbsec:stabstrip} concerns the stability estimate for the transport in the strip. Subsection \ref{sbsec:proofstrip} contains the proof of the well-posedness of the coupling, and we state its stability estimate in Subsection \ref{sbsec:stabstripsys}.
	
	\subsection{The Stokes problem in the strip} \label{sbsec:Stokesstrip}
	
		\subsubsection{The functional spaces} \label{sbsbsec:spaces}
		Let us set the following subdomains of $\Omega$, for any $k \in \Z$,
		\begin{equation*}
			U_k = \{ (x,z) \in \Omega : k < x < k+1 \} \qquad U_k^* = \{ (x,z) \in \Omega : k-1 < x < k+2 \}.
		\end{equation*}
		Define a smooth map $\chi : \Omega \rightarrow [0,1]$, depending on $x$ only, equal to $1$ on $U_0$ and to $0$ outside $U_0^*$. Set its translations $\chi_k := \chi(\cdot - k\be_x)$ so that $\chi_k$ is equal to $1$ on $U_k$ and supported in $U_k^*$. For convenience we choose $\chi$ so that $\sum_{k= -\infty}^{+\infty} \chi_k = 2$ in $\Omega$.
		Let us set for $m \in \Z$ and $1 \leq q \leq \infty$ the uniformly local norm
		\begin{equation*}
		\forall u \in W^{m,q}_\loc(\Omega), \qquad \|u\|_{W^{m,q}_\uloc} := \sup_{k\in\Z} \|\chi_k u\|_{W^{m,q}},
		\end{equation*}
		and define the \emph{Kato space} as the set of locally Sobolev maps having a finite uniformly local norm,
		\begin{equation*}
			W^{m,q}_\uloc(\Omega) := \{ u \in W^{m,q}_\loc(\Omega) : \|u\|_{W^{m,q}_\uloc} < \infty \}.
		\end{equation*}
		This is a Banach space, that does not depend on the choice of $\chi$, see for instance \cite[\S 2.2]{Alazard_2016}. The following result provides in particular some handy norms equivalences.
		\begin{lemma} \label{lem:comp}
			For any $m \in \N$ and $1<q\leq\infty$, the following quantities are equivalent,
			\begin{equation*}
			u \in W^{m,q}_\loc(\Omega), \quad \sup_{k\in\Z} \|u\|_{W^{m,q}(U_k)} \simeq \sup_{k\in\Z} \|u\|_{W^{m,q}(U_k^*)} \simeq \|u\|_{W^{m,q}_\uloc}.
			\end{equation*}
			For any $m \in \N^*$ and $1<q<\infty$, the following quantities are comparable,
			\begin{equation*}
			u \in W^{-m,q}_\loc(\Omega), \quad \sup_{k\in\Z} \|u\|_{W^{-m,q}(U_k)} \lesssim \sup_{k\in\Z} \|u\|_{W^{-m,q}(U_k^*)} \simeq \|u\|_{W^{-m,q}_\uloc}.
			\end{equation*}
		\end{lemma}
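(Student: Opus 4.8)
The plan is to exploit the locally finite overlap of the covers $\{U_k^*\}$ and $\{U_k\}$ together with the fact that $\chi_k$ equals $1$ on $U_k$ and is supported in $U_k^*$. For the positive-order statement, the chain of inequalities $\sup_k \|u\|_{W^{m,q}(U_k)} \leq \sup_k \|\chi_k u\|_{W^{m,q}} \leq C \sup_k \|u\|_{W^{m,q}(U_k^*)}$ is the easy half: the first inequality holds since $\chi_k \equiv 1$ on $U_k$, and the second follows by the Leibniz rule, bounding the $W^{m,q}$ norm of $\chi_k u$ on $U_k^*$ in terms of $\|u\|_{W^{m,q}(U_k^*)}$ with a constant depending only on $\|\chi\|_{W^{m,\infty}}$ (hence only on $m$ and the fixed choice of $\chi$). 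The reverse direction — controlling $\sup_k \|u\|_{W^{m,q}(U_k^*)}$ by $\sup_k \|u\|_{W^{m,q}(U_k)}$ — uses that each $U_k^*$ is covered by the three cubes $U_{k-1}, U_k, U_{k+1}$, so $\|u\|_{W^{m,q}(U_k^*)}^q \leq \sum_{j=k-1}^{k+1} \|u\|_{W^{m,q}(U_j)}^q \leq 3 \sup_j \|u\|_{W^{m,q}(U_j)}^q$ (with the obvious modification for $q=\infty$). This closes the loop and gives the three-way equivalence; note that this argument needs no lower bound on $q$ beyond $q \geq 1$, and in fact works for $q=\infty$ as stated.

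For the negative-order statement one argues by duality, and here the restriction $1<q<\infty$ enters so that $W^{-m,q}$ is the dual of $W^{m,q'}_0$ with $1/q+1/q'=1$. First I would record that $\sup_k \|u\|_{W^{-m,q}(U_k)} \lesssim \sup_k \|\chi_k u\|_{W^{-m,q}}$: given a test function $\vphi \in C^\infty_\rc(U_k)$ with $\|\vphi\|_{W^{m,q'}} \leq 1$, we have $\langle u, \vphi\rangle_{U_k} = \langle \chi_k u, \vphi\rangle$ since $\chi_k \equiv 1$ on $\mathrm{supp}\,\vphi$, whence $|\langle u,\vphi\rangle| \leq \|\chi_k u\|_{W^{-m,q}(\Omega)}$; taking the supremum over $\vphi$ gives the bound. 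For the comparison $\sup_k\|\chi_k u\|_{W^{-m,q}} \lesssim \sup_k \|u\|_{W^{-m,q}(U_k^*)}$: pair $\chi_k u$ against $\psi \in C^\infty_\rc(\Omega)$ with $\|\psi\|_{W^{m,q'}}\leq 1$; since $\chi_k$ is supported in $U_k^*$ we get $\langle \chi_k u, \psi\rangle = \langle u, \chi_k \psi\rangle$, and $\chi_k\psi \in C^\infty_\rc(U_k^*)$ with $\|\chi_k\psi\|_{W^{m,q'}(U_k^*)} \leq C \|\psi\|_{W^{m,q'}} \leq C$ by the Leibniz rule again, so $|\langle \chi_k u,\psi\rangle| \leq C\|u\|_{W^{-m,q}(U_k^*)}$. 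Finally the reverse comparison $\sup_k \|u\|_{W^{-m,q}(U_k^*)} \lesssim \sup_k \|\chi_k u\|_{W^{-m,q}}$ (together with the first displayed inequality, this yields the stated $\simeq$ with $\|u\|_{W^{-m,q}_\uloc}$): using $\sum_j \chi_j = 2$, write $2u = \sum_j \chi_j u$ on $U_k^*$, and on $U_k^*$ only the finitely many indices $j \in \{k-1,k,k+1,k+2,k+3\}$ or so contribute (those $j$ with $U_j^* \cap U_k^* \neq \emptyset$); then for $\vphi \in C^\infty_\rc(U_k^*)$, $2\langle u,\vphi\rangle = \sum_j \langle \chi_j u, \vphi\rangle$, a finite sum, each term bounded by $\|\chi_j u\|_{W^{-m,q}}\|\vphi\|_{W^{m,q'}}$, giving the claim with a constant counting the overlap.

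The main point requiring care — and the reason the negative-order case only gives a one-sided $\lesssim$ rather than $\simeq$ between the $U_k$-sup and the $U_k^*$-sup — is the direction that would control $\sup_k\|u\|_{W^{-m,q}(U_k^*)}$ by $\sup_k \|u\|_{W^{-m,q}(U_k)}$. For positive-order spaces one can localize a function to $U_k^*$ by summing its restrictions to the subcubes, because restriction is bounded on $W^{m,q}$; for negative-order spaces the analogous step fails since a distribution supported near the interface of two subcubes cannot be split, and the norm $\|\cdot\|_{W^{-m,q}(U_k)}$ ``does not see'' mass concentrated on $\partial U_k$. Hence in the statement this comparison is only claimed with $\lesssim$, and the genuine equivalence is with the $U_k^*$-sup and with $\|u\|_{W^{-m,q}_\uloc}$; I would make sure the write-up does not overclaim here. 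The rest is routine: all constants depend only on $m$, on $q$ (through the overlap counting, which is in fact $q$-independent, and through the duality $W^{-m,q}=(W^{m,q'}_0)'$), and on the fixed cutoff $\chi$, as required.
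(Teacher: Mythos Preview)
Your proposal is correct and follows essentially the same route as the paper: the positive-order case is handled by the chain $\|u\|_{W^{m,q}(U_k)} \leq \|\chi_k u\|_{W^{m,q}} \leq C\|u\|_{W^{m,q}(U_k^*)}$ together with the three-cube covering of $U_k^*$, and the negative-order case uses duality, transferring $\chi_k$ onto the test function, and the partition-of-unity identity $\sum_j \chi_j = 2$ with finite overlap---exactly as in the paper's appendix. Your remark on why only a one-sided inequality holds between the $U_k$- and $U_k^*$-sups in the negative case also matches the paper's observation (which it illustrates with a Dirac mass on $\partial U_k$).
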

		Although this result presents no difficulty, we provide a short proof and a comment, about the missing inequality for the last comparison, in appendix \ref{ap:comp} for sake of completeness. 
		
		\subsubsection{Flux condition} \label{sbsbsec:flux}
		
		In general, the homogeneous Stokes system as formulated in (\ref{eq:s}) admits non-trivial solutions in domains with unbounded boundaries, called \emph{Poiseuille solutions}, see for instance \cite[Section IV]{galdi}. In our case, these are described as follows
		\begin{equation*}
		\bu_\phi(x,z) = \begin{pmatrix} 6\phi z(1-z) \\ 0 \end{pmatrix}, \quad p_\phi(x,z) = 12\phi x, \qquad \phi \in \R.
		\end{equation*}
		Let us introduce the definition of the \emph{flux} of $\bu$ throw the section of abscissa $x \in \R$ of $\Omega$, 
		\begin{equation*}
		\int u_1 \ud z := \int_0^1 u_1(x,z) \ud z = \int_0^1 \bu(x,z) \cdot \be_x \ud z.
		\end{equation*}
		Notice that the divergence-free and the homogeneous Dirichlet condition ensure that any solution of (\ref{eq:s}) on $\Omega$ has flux independent of $x$, which we will denote further by $\int u_1 \ud z$. Indeed,
		\begin{equation} \label{eq:zf}
		\frac{\rd}{\rd x} \left[\int_0^1 u_1(x,z) \ud z\right] = \int_0^1 \p_x u_1(x,z)\ud z = - \int_0^1 \p_z u_2(x,z) \ud z = u_2(x,0) - u_2(x,1) = 0.
		\end{equation}
		For instance, the flux of the Poiseuille solution $\bu_\phi$ is $\phi$. We will see that a choice of flux value prescribes a unique Poiseuille solution, and provides uniqueness of a solution in Kato spaces. Since the Stokes equation is linear, we can choose one value without loss of generality. From now on we consider the Stokes problem with the zero flux condition,
		\begin{equation} \label{eq:S0}
		\left\{
		\begin{array}{rcll}
		-\Delta \bu + \nabla p &= &\bf &\text{in } \Omega, \\
		\div \, \bu &= &0 &\text{in } \Omega,\\
		\bu &= &\bzero & \text{on } \p\Omega, \\
		\int u_1 \ud z &= &0 & \text{in } \R.
		\end{array}
		\right.
		\end{equation}
		
		\subsubsection{Well-posedness of the Stokes problem in the strip} \label{sbsbsec:wp}
		
		\indent We show that the system (\ref{eq:S0}) is well-posed for $H^{-1}_\uloc(\Omega)$ and $L^2_\uloc(\Omega)$ data $\bf$, with some elliptic regularity gain. We then deduce that this system is also well-posed for $L^\infty(\Omega)$ data with $W^{1,\infty}(\Omega)$ solutions, by adapting the steps of the proof of the bounded domain case. The general technique presented here is originally due to Ladyzhenskaya and Solonnikov \cite{Ladyzhenskaya_1983}. A proof in a framework closer to ours can be found in \cite[Theorem 3]{gerard2010relevance}. Although these are classical tools, we have not found the proof of this result in the literature.
		\begin{theorem} \label{thm:suloc}
			Let $\bf \in H^{-1}_\uloc(\Omega)$. There exists a unique $\bu$ in $H^1_\uloc(\Omega)$ satisfying (\ref{eq:S0}), which moreover obeys the inequality
			\begin{equation} \label{eq:suloc}
				\|\bu\|_{H^1_\uloc} \leq C\|\bf\|_{H^{-1}_\uloc}.
			\end{equation}
		\end{theorem}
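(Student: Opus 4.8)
The plan is to reduce the $H^1_\uloc$ problem on the strip to a sequence of classical $H^1$ problems on bounded subdomains, then pass to the limit using a compactness argument, with the crucial point being uniform interior estimates that upgrade local bounds to a genuine $H^1_\uloc$ bound. First I would fix a smooth truncation adapted to the covering $\{U_k^*\}$ and, for each large $n$, solve the standard Stokes problem (via Theorem \ref{thm:galdi}, part 2) on the bounded Lipschitz domain $\Omega_n := (-n,n)\times(0,1)$ with source $\bf$ restricted to $\Omega_n$ and homogeneous Dirichlet data, obtaining $(\bu_n,p_n) \in H^1_0(\Omega_n)\times L^2(\Omega_n)/\R$ extended by $\bzero$ to $\Omega$. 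The energy estimate on $\Omega_n$ alone only gives $\|\bu_n\|_{H^1(\Omega_n)} \leq C\sqrt{n}\,\|\bf\|_{H^{-1}_\uloc}$, which is not uniform; the hard part is to localize this.

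The main obstacle, and the technical heart of the argument, is the \emph{Saint-Venant / Caccioppoli-type interior estimate}: testing the equation for $\bu_n$ against $\chi_k^2 \bu_n$ (after correcting the divergence of $\chi_k^2\bu_n$ with a Bogovskii-type operator to stay in the divergence-free test space, which also produces the pressure term $\int p_n \,\mathrm{div}(\chi_k^2\bu_n)$), one gets
\[
\|\bu_n\|_{H^1(U_k)}^2 \lesssim \|\bf\|_{H^{-1}(U_k^*)}^2 + \|\bu_n\|_{L^2(U_k^*)}^2 + \text{(pressure cross-terms on } U_k^*\text{)}.
\]
Handling the pressure requires the local estimate $\|p_n - \langle p_n\rangle_{U_k^*}\|_{L^2(U_k^*)} \lesssim \|\nabla\bu_n\|_{L^2(U_{k}^{**})} + \|\bf\|_{H^{-1}(U_k^{**})}$ on a slightly larger slab, and then absorbing the $\|\bu_n\|_{L^2}$ terms is done by an iteration over neighbouring slabs combined with the flux condition — here the zero-flux constraint is what rules out the Poiseuille mode and lets a Poincaré-type inequality control $\|\bu_n\|_{L^2(U_k^*)}$ by $\|\nabla\bu_n\|_{L^2}$ with the $\be_z$-direction Poincaré constant. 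A discrete Gronwall / geometric-decay argument on the sequence $a_k := \|\bu_n\|_{H^1(U_k)}^2$ then yields $\sup_k a_k \lesssim \|\bf\|_{H^{-1}_\uloc}^2$ uniformly in $n$, i.e. $\|\bu_n\|_{H^1_\uloc} \leq C\|\bf\|_{H^{-1}_\uloc}$.

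With the uniform $H^1_\uloc$ bound in hand, I would extract (by a diagonal argument over the compact inclusions $H^1(U_k^*)\hookrightarrow L^2(U_k^*)$) a subsequence converging weakly in $H^1_\loc$ and strongly in $L^2_\loc$ to some $\bu \in H^1_\uloc(\Omega)$, with $\bu = \bzero$ on $\p\Omega$, $\mathrm{div}\,\bu = 0$, and zero flux, these being preserved under the convergence. Passing to the limit in the weak (divergence-free) formulation tested against any $\bvphi \in C^\infty_\rc(\Omega)$ with $\mathrm{div}\,\bvphi = 0$ shows $\bu$ solves \eqref{eq:S0} (the pressure is then recovered by de Rham), and lower semicontinuity of the $H^1_\uloc$ norm under $H^1_\loc$-weak convergence gives \eqref{eq:suloc}. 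For uniqueness, the difference $\bv$ of two $H^1_\uloc$ solutions solves the homogeneous system with zero flux; one cannot simply test against $\bv$ itself since it need not be in $H^1$, so instead I test against $\chi_k^2\bv$ (again Bogovskii-correcting the divergence) and run the same Saint-Venant iteration with zero right-hand side, which forces $a_k = \|\bv\|_{H^1(U_k)}^2$ to satisfy a homogeneous geometric-decay recursion bounded by the finite quantity $\|\bv\|_{H^1_\uloc}^2$; the only bounded solution of that recursion is $a_k \equiv 0$, hence $\bv \equiv \bzero$.
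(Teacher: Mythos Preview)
Your overall strategy matches the paper's: approximate by Stokes problems on the bounded rectangles $\Omega_n$, derive uniform interior (Saint-Venant type) estimates, extract a weak $H^1_\loc$ limit, and handle uniqueness by the same localized energy method applied to the homogeneous problem. Two technical points in your implementation, however, diverge from the paper and would not close as written.

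\textbf{The energy iteration.} The paper does \emph{not} iterate on single-cell energies $a_k = \|\bu_n\|_{H^1(U_k)}^2$; a Caccioppoli relation of the form $a_k \lesssim F + C(a_{k-1}+a_{k+1})$ is not contractive, and no ``geometric decay'' argument on $(a_k)_k$ yields $\sup_k a_k \lesssim F$ uniformly in $n$. Instead the paper uses the \emph{cumulative} energies $E_{n,k} := \|\bu_n\|_{H^1(\Omega_k)}^2$ and tests the equation against $\eta_k\bu_n$, where $\eta_k\equiv 1$ on $\Omega_k$ and $\eta_k\equiv 0$ off $\Omega_{k+1}$. All commutator terms then live on the annulus $\Omega_{k+1}\setminus\Omega_k$, producing the hole-filling relation
\[
E_{n,k} \leq C\big(E_{n,k+1}-E_{n,k} + (k+1)\|\bf\|_{H^{-1}_\uloc}^2\big),
\]
which, combined with the crude global bound $E_{n,n}\leq Cn\|\bf\|_{H^{-1}_\uloc}^2$, gives $E_{n,k_0}\leq C\|\bf\|_{H^{-1}_\uloc}^2$ by a short descending-induction lemma (Lemma~\ref{lem:tec}).

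\textbf{The role of the zero-flux condition.} The Poincar\'e inequality in the $z$-direction follows already from the Dirichlet condition on $\{z=0,1\}$ and needs no flux hypothesis. The paper does not use a Bogovski\u{\i} correction either: it tests directly against the non-divergence-free $\eta_k\bu_n$, which generates the pressure term $\int \eta_k' p_n u_{n,1}$ supported on $U_k\cup(-U_k)$. The zero-flux condition $\int u_{n,1}\,\mathrm{d}z=0$ is used precisely here, to replace $p_n$ by $p_n-\langle p_n\rangle_{U_k}$ at no cost, after which Ne\v{c}as' inequality bounds that factor by $\|\nabla\bu_n\|_{L^2(U_k)}+\|\bf\|_{H^{-1}(U_k)}$. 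For uniqueness the paper runs the same cumulative-energy computation with $\bf=\bzero$ to obtain $E_k\leq C(E_{k+1}-E_k)$; since $E_{k+1}-E_k\leq \|\bu\|_{H^1_\uloc}^2$, this shows $\bu\in H^1(\Omega)$, and one concludes by the classical $H^1$ well-posedness on the strip rather than by arguing that a bounded solution of a homogeneous recursion must vanish.
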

		Let us introduce a few more notations. Set for any $k \in \Z$,
		\begin{equation*}
		\Omega_k := \{(x,z) \in \Omega : -k < x < k\}.
		\end{equation*}
		Let us define for any $k \in \N^*$ a smooth map $\eta_k : \Omega \rightarrow [0,1]$, depending on $x$ only, equal to $1$ in $\Omega_k$ and supported in $\Omega_{k+1}$. Remark that its derivatives are supported in $\Omega_{k+1}\backslash \Omega_k$. We can choose $\eta_k$ such that there exists a constant $C>0$ independent of $k$ and satisfying
		\begin{equation*}
			\|\eta_k'\|_{L^\infty} + \|\eta_k''\|_{L^\infty} \leq C.
		\end{equation*}
		Finally, let us observe the following estimate linking uniformly local and classical Sobolev norms over $\Omega_n$. We report its proof in appendix \ref{ap:gluloc}.
		\begin{lemma} \label{lem:gluloc}
			Let $n \in \N^*$. There exists a constant $C>0$ such that for any $f$ in $L^2_\uloc(\Omega)$, resp. in $H^{-1}_\uloc(\Omega)$, one has
			\begin{equation*}
			\|f\|_{L^2(\Omega_n)} \leq C n^{1/2} \|f\|_{L^2_\uloc}, \quad \text{resp. } \|f\|_{H^{-1}(\Omega_n)} \leq Cn^{1/2}\|f\|_{H^{-1}_\uloc}.
			\end{equation*}
		\end{lemma}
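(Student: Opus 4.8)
The plan is to cover the bounded box $\Omega_n = \R\times(0,1) \cap \{-n < x < n\}$ by the unit strips $U_k$ used in the definition of the uniformly local norm, count how many of them are needed, and sum the corresponding local estimates. Since $\Omega_n$ meets only the strips $U_k$ with $-n-1 \le k \le n$, there are at most $2n+2 \le 4n$ such indices (for $n \ge 1$). For the $L^2$ case this is immediate: writing $\|f\|_{L^2(\Omega_n)}^2 \le \sum_{k=-n-1}^{n} \|f\|_{L^2(U_k)}^2$ and bounding each summand by $\|f\|_{L^2_\uloc}^2$ via the equivalence $\|f\|_{L^2(U_k)} \lesssim \|\chi_k f\|_{L^2} \le \|f\|_{L^2_\uloc}$ from Lemma \ref{lem:comp}, we get $\|f\|_{L^2(\Omega_n)}^2 \le 4n\,\|f\|_{L^2_\uloc}^2$, hence the claimed $C n^{1/2}$ bound.

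For the $H^{-1}$ case I would argue by duality. Take $\varphi \in C^\infty_\rc(\Omega_n)$ with $\|\varphi\|_{H^1_0(\Omega_n)} \le 1$ and estimate $\langle f, \varphi\rangle$. Using the partition of unity $\sum_k \chi_k = 2$, write $\varphi = \tfrac12 \sum_k \chi_k \varphi$, where the sum runs only over the $\lesssim n$ indices $k$ meeting $\Omega_n$. Then $\langle f,\varphi\rangle = \tfrac12\sum_k \langle f, \chi_k\varphi\rangle = \tfrac12 \sum_k \langle \chi_k f, \varphi\rangle$, and each term is bounded by $\|\chi_k f\|_{H^{-1}}\,\|\varphi\|_{H^1} \le \|f\|_{H^{-1}_\uloc}\,\|\chi_k \varphi\|_{H^1}$ (extending $\chi_k\varphi$ by zero). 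It remains to control $\sum_k \|\chi_k\varphi\|_{H^1}$ by $Cn^{1/2}\|\varphi\|_{H^1_0(\Omega_n)}$: by Cauchy--Schwarz over the $\lesssim n$ active indices, $\sum_k \|\chi_k\varphi\|_{H^1} \le (Cn)^{1/2}\big(\sum_k \|\chi_k\varphi\|_{H^1}^2\big)^{1/2}$, and since the $\chi_k$ are uniformly bounded in $W^{1,\infty}$ and have finite overlap, $\sum_k \|\chi_k\varphi\|_{H^1}^2 \lesssim \|\varphi\|_{H^1}^2$. Taking the supremum over such $\varphi$ yields $\|f\|_{H^{-1}(\Omega_n)} \le C n^{1/2}\|f\|_{H^{-1}_\uloc}$.

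The only genuinely delicate point is the $H^{-1}$ bound, specifically making sure the localisation $\varphi \mapsto \chi_k\varphi$ is compatible with the norm: one needs $\|\chi_k\varphi\|_{H^1(\Omega)} \lesssim \|\varphi\|_{H^1}$ uniformly in $k$ (Leibniz rule plus the uniform $W^{1,\infty}$ bound on $\chi_k$) and the finite-overlap property $\sum_k \chi_k^2 \le C$ so that the squared sum telescopes back to $\|\varphi\|_{H^1}^2$ without an extra factor of $n$. The factor $n^{1/2}$ then comes out of the Cauchy--Schwarz step exactly as in the $L^2$ case — there is no loss because only $O(n)$ strips touch $\Omega_n$. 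Everything else is bookkeeping, which is why the full details are deferred to appendix \ref{ap:gluloc}.
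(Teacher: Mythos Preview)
Your proposal is correct and follows essentially the same route as the paper's appendix proof: decompose $\Omega_n$ into $O(n)$ unit cells for the $L^2$ case, and for the $H^{-1}$ case use the partition of unity $\sum_k\chi_k=2$ on a test function $\varphi$, bound each localized pairing by $\|f\|_{H^{-1}_\uloc}\|\chi_k\varphi\|_{H^1}$, then apply Cauchy--Schwarz over the $O(n)$ active indices together with the finite-overlap bound $\sum_k\|\chi_k\varphi\|_{H^1}^2\lesssim\|\varphi\|_{H^1}^2$. One presentational slip: the displayed chain $\|\chi_k f\|_{H^{-1}}\,\|\varphi\|_{H^1} \le \|f\|_{H^{-1}_\uloc}\,\|\chi_k \varphi\|_{H^1}$ is not a valid inequality as written (the second factor goes the wrong way); what you actually use---and what the paper does---is the direct bound $|\langle f,\chi_k\varphi\rangle|\le \|f\|_{H^{-1}(U_k^*)}\|\chi_k\varphi\|_{H^1}\lesssim \|f\|_{H^{-1}_\uloc}\|\chi_k\varphi\|_{H^1}$, which your subsequent sentences make clear.
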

			
		\begin{proof}[Proof of Theorem \ref{thm:suloc}]
			Let us set for any $n\in\N^*$ the unique couple $(\bu_n,p_n)$ in $H^1(\Omega_n)\times \left( L^2(\Omega_n) \slash \R \right)$ satisfying the system
			\begin{equation*} \label{eq:Sn} %\tag{$\text{S}_n$}
				\left\{
				\begin{array}{rcll}
					-\Delta\bu_n + \nabla p_n &= &\bf &\text{in } \Omega_n,\\
					\div \, \bu_n &= &0 &\text{in } \Omega_n, \\
					\bu_n &= &\bzero &\text{on } \p\Omega_n,
				\end{array}
				\right.
			\end{equation*}
			existence and uniqueness of which is ensured by \cite[Theorem IV.5.1]{boyer2012mathematical}. Then define for any integers $1 \leq k \leq n$ the energy of $\bu_n$ on the subdomain $\Omega_k$,
			\begin{equation*}
				E_{n,k} := \|\bu_n\|^2_{H^1(\Omega_k)} = \int_{-k}^k\int_0^1 |\nabla\bu_n|^2 + |\bu_n|^2.
			\end{equation*}
			By evaluating (\ref{eq:Sn}) in the test function $\bu_n$ we find
			\begin{equation*}
				\int_{-n}^n\int_0^1 |\nabla \bu_n|^2 = \langle \bf,\bu_n \rangle_{\Omega_n} \leq \|\bf\|_{H^{-1}(\Omega_n)} \|\bu_n\|_{H^1(\Omega_n)}.
			\end{equation*}
			Using Lemma \ref{lem:gluloc} and Poincaré's inequality, for which the constant involved can be chosen independent of $n$, one finds
			\begin{equation*}
				E_{n,n} \leq C n^{1/2} \|\bf\|_{H^{-1}_\uloc}.
			\end{equation*}
			Our goal is to show that there exists $C>0$ independent of $n$ and $\bf$, such that
			\begin{equation} \label{eq:En1}
				E_{n,1} = \|\bu_n\|_{H^1(\Omega_1)}^2 \leq C\|\bf\|_{H^{-1}_\uloc}^2.
			\end{equation}
			This allows to conclude to the existence of a solution $\bu \in H^1_\uloc(\Omega)$ obeying the estimate (\ref{eq:suloc}), by translation invariance of the domain and compactness considerations. To prove (\ref{eq:En1}) we will fix $n\in\N^*$ and show by descending induction over $k$ that there exists $C>0$ independent of $\bu$, $f$, $n$ and $k$ such that
			\begin{equation*}
				\forall \,1 \leq k \leq n, \quad E_{n,k} \leq Ck\|\bf\|_{H^{-1}_\uloc}^2.
			\end{equation*}
			
			Let us evaluate the variational formulation of (\ref{eq:Sn}) in the test function $\eta_k\bu_n$, which yields
			\begin{equation} \label{eq:dev}
				\int_{\Omega_n} \eta_k|\nabla\bu_n|^2 = \langle\bf,\eta_k\bu_n\rangle_{\Omega_{k+1}} - \int_{\Omega_n} \eta_k'\bu_n\cdot\p_x\bu_n +  \int_{\Omega_n} \eta_k'p_n u_{n,1}.
			\end{equation}
			By Poincaré's inequality we bound from below the left hand side by $E_{n,k}$, up to a multiplicative constant. Let us bound from above all the right hand side terms. Lemma \ref{lem:gluloc} provides
			\begin{equation} \label{eq:bound1}
				\langle \bf,\eta_k\bu_n\rangle_{\Omega_{k+1}} \leq C\|\bf\|_{H^{-1}(\Omega_{k+1})}\|\bu_n\|_{H^1(\Omega_{k+1})} \leq C(k+1)^{1/2} \|\bf\|_{H^{-1}_\uloc} E_{n,k+1}^{1/2}.
			\end{equation}
			Since $\eta_k'$ is supported in $\Omega_{k+1}\backslash \Omega_k$ and uniformly bounded independently of $k$, we have
			\begin{equation} \label{eq:bound2}
				\left|\int_{\Omega_n}\eta_k' \bu_n\cdot\p_x\bu_n \right| \leq C\int_{\Omega_{k+1}\backslash \Omega_k}|\nabla\bu_n|^2 +|\bu_n|^2 = C(E_{n,k+1}-E_{n,k}).
			\end{equation}
			Let us split the remaining integral of \ref{eq:dev} as follows,
			\begin{equation*}
				\int_{\Omega_n}\eta_k'p_nu_{n,1} = \int_{U_k}\eta_k'p_nu_{n,1} + \int_{-U_k}\eta_k'p_nu_{n,1}.
			\end{equation*}
			Remark that
			\begin{equation*}
				\int_{U_k}\eta_k' u_{n,1} = \int_k^{k+1} \eta_k' \int u_{n,1} \ud z = 0,
			\end{equation*}
			since the flux $\int \bu_{n,1} \ud z$ is independent of $x$ for the same reason as in (\ref{eq:zf}), and equal to $0$ because of the homogenous Dirichlet condition in $\{ x = \pm n \}$. Hence, let us denote by $\langle p_n\rangle_{U_k}$ the average of $p_n$ over $U_k$, and find
			\begin{align*}
				\int_{U_k} \eta_k'p_nu_{n,1}
					= \int_{U_k} \eta_k'(p_n-\langle p_n\rangle_{U_k})u_{n,1}
					\leq C\|p_n-\langle p_n\rangle_{U_k}\|_{L^2(U_k)}\|\bu_n\|_{L^2(U_k)}.
			\end{align*}
			Let us apply Ne\v{c}as inequality, see \cite[Lemma IV.1.9]{boyer2012mathematical}, and get
			\begin{align*}
				\|p_n-\langle p_n\rangle_{U_k}\|_{L^2(U_k)} &\leq C\|\nabla p_n\|_{H^{-1}(U_k)} \\
				&\leq C\|\Delta \bu_n + \bf\|_{H^{-1}(U_k)} \\
				&\leq C\left( \|\nabla \bu_n\|_{L^2(U_k)} + \|\bf\|_{H^{-1}(U_k)}\right).
			\end{align*}
			Hence we deduce, using Lemma \ref{lem:gluloc}, that
			\begin{equation} \label{eq:bound3}
				\left|\int_{U_k} \eta_k'p_nu_{n,1}\right| \leq C\big( (E_{n,k+1}- E_{n,k})^{1/2} + \|\bf\|_{H^{-1}_\uloc}\big)E_{n,k}^{1/2}.
			\end{equation}
			The very same considerations hold true for the integral over $-U_k$. Bounding (\ref{eq:dev}) thanks to (\ref{eq:bound1}), (\ref{eq:bound2}) and (\ref{eq:bound3}), plus applying Young's inequality, we obtain that for any integers $k,n$ such that $1 \leq k \leq n$,
			\begin{equation} \label{eq:rel}
				E_{n,k} \leq C\left( E_{n,k+1} - E_{n,k} + (k+1)\|\bf\|_{H^{-1}_\uloc}^2\right).
			\end{equation}
			This relation implies (\ref{eq:En1}), as stated in the following lemma, proven apart in appendix \ref{ap:rel}.
			\begin{lemma}\label{lem:tec}
				Let $(E_{n,k})_{k,n}$ be a non-negative family indexed by all couples $(k,n) \in \N^2$ satisfying $1 \leq k \leq n$, non-decreasing with respect to $k$, obeying (\ref{eq:rel}) and such that
				\begin{equation*}
					\forall n \in \N^*, \quad E_{n,n} \leq Cn\|\bf\|_{H^{-1}_\uloc}^2.
				\end{equation*}
				There exists $C_0 > 0$ and $k_0 \in \N^*$ independent of $\bf$ such that for any $k,n \in \N$ satisfying $k_0 \leq k \leq n$, we have
				\begin{equation*}
					E_{n,k} \leq C_0 k \|\bf\|_{H^{-1}_\uloc}^2.
				\end{equation*}
			\end{lemma}
			\noindent This result implies the expected inequality,
			\begin{equation*}
				\forall n \geq k_0, \quad E_{n,1} \leq E_{n,k_0} \leq C_0k_0 \|\bf\|_{H^{-1}_\uloc}^2.
			\end{equation*}
			By extending $\bf|_{\Omega_n}$ and $\bu_n$ to $\Omega$ by $\bzero$ outside $\Omega_n$, we can perform a similar analysis and find the same energy estimates over each subdomain $U_\ell$, namely,
			\begin{equation*}
				\forall n \geq k_0, \forall \ell \in \Z,  \quad \|\bu_n\|_{H^1(U_\ell)}^2 \leq C \|\bf\|_{H^{-1}_\uloc}^2,
			\end{equation*}
			where $C = C_0k_0$ with $C_0$ and $k_0$ independent of $n$ and $\ell$. Therefore, for any $n\geq k_0$, $\bu_n$ belongs to $H^1_\uloc(\Omega)$ and satisfies
			\begin{equation*}
				\|\bu_n\|_{H^1_\uloc}\leq C\|\bf\|_{H^{-1}_\uloc}.
			\end{equation*}
			Since bounded subsets of $H^1(U_\ell)$ are weakly relatively compact, there exists a subsequence of $(\bu_n)_n$ converging weakly in $H^1_\loc(\Omega)$ toward some $\bu \in H^1_\uloc(\Omega)$, with $\bu$ satisfying estimate (\ref{eq:suloc}). The limit also verifies $\int u_1 \ud z=0$ since every $\bu_n$ has zero flux. Hence, it is a solution of (\ref{eq:S0}).
			
			To prove uniqueness of such an element, let us consider some $\bu \in H^1_\uloc(\Omega)$ satisfying (\ref{eq:S0}) with $\bf = \bzero$. Define the energy $E_k := \|\bu\|_{H^1(\Omega_k)}^2$ and proceed to the same computations as previously to find 
			\begin{equation*}
				E_k \leq C(E_{k+1}-E_k + 1).
			\end{equation*}
			Notice that the zero flux condition is necessary to bound the pressure term and obtain such an estimate. Since $E_{k+1}-E_k$ is bounded by $\|\bu\|_{H^1_\uloc}^2$, we have
			\begin{equation*}
				\forall k \in \Z, \quad E_k \leq C( \|\bu\|_{H^1_\uloc}^2 +1) < \infty,
			\end{equation*}
			which means that $\bu$ belongs to $H^1(\Omega)$. Then we conclude by well-posedness of the Stokes system over $\Omega$ in $H^1(\Omega)$, see for instance \cite{temam2001navier}. 
		\end{proof}
	
		\begin{remark}
			The pressure does not belong to $L^2_\uloc(\Omega)$ in general; observe for instance the following triplet, satisfying (\ref{eq:S0}),
			\begin{equation*}
				\bf = \be_x, \quad \bu = \bzero, \quad p(x,z) = x.
			\end{equation*}
			Nevertheless, we have thanks to Ne\v{c}as inequality some similar estimate as (\ref{eq:suloc}) on the pressure,
			\begin{equation*}
				\sup_{k\in\Z} \|p-\langle p \rangle_{U_k}\|_{L^2(U_k)} \leq C\|\bf\|_{L^2_\uloc}.
			\end{equation*}	
		\end{remark}
		\begin{remark} \label{rk:lay}
			This proof does not adapt straightforwardly to the case of the layer domain $\R^2\times(0,1)$. The first issue one needs to deal with is to determine conditions on $\bu$ ensuring uniqueness of a solution. Also, the descending induction on the energy estimates no longer holds in this form. Indeed, one needs to replace the slices $[k,k+1]\times(0,1)$ in $\R\times(0,1)$ by chunks $[k,k+1]\times[\ell,\ell+1]\times(0,1)$ in $\R^2\times(0,1)$. To bound the energy on $[-k,k]^2\times(0,1)$ by the energy on $[-(k+1),k+1]^2\times(0,1)$ makes appear some quadratic terms in $k$ in (\ref{eq:rel} instead of the linear ones present for the strip case, making the induction fail. Under different boundary assumptions, it is however possible to adapt it in a non-trivial way and to conclude; see for instance \cite[Section 3]{dalibard}.
		\end{remark}
		Since $L^\infty(\Omega) \subset L^2_\uloc(\Omega) \subset H^{-1}_\uloc(\Omega)$, we already have existence of a solution to (\ref{eq:S0}) for $L^\infty$ data. Recall that we need to establish the $W^{1,\infty}$ regularity of this solution. We first show that the system satisfies some elliptic regularity property in the hilbertian framework.
		\begin{theorem} \label{thm:ellreg}
			Let $\bf\in L^2_\uloc(\Omega)$. The associated solution $\bu \in H^1_\uloc(\Omega)$ of (\ref{eq:S0}) belongs to $H^2_\uloc(\Omega)$ and obeys the inequality
			\begin{equation} \label{eq:sulocell}
				\|\bu\|_{H^2_\uloc}\leq C\|\bf\|_{L^2_\uloc}.
			\end{equation}
		\end{theorem}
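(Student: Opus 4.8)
The plan is to localize the problem on each slice $U_k$ and invoke the interior/boundary elliptic regularity of the Stokes system in a bounded domain (Theorem \ref{thm:galdi}, part 1), being careful to produce estimates with constants independent of $k$. Since the strip is translation-invariant and we want uniformly local bounds, it suffices by Lemma \ref{lem:comp} to bound $\|\bu\|_{H^2(U_k)}$ by $C\|\bf\|_{L^2_\uloc}$ with $C$ independent of $k$; by the same translation invariance it is enough to treat $k=0$. The natural localized object is $\bv := \chi_0^* \bu$, where $\chi_0^*$ is a smooth cutoff depending on $x$ only, equal to $1$ on $U_{-1}^*\cup U_0^*\cup U_1^*$ (a neighbourhood of $\overline{U_0}$ large enough that its support stays a bounded distance from the ``new'' artificial boundary) and supported in a fixed bounded sub-strip $\Omega'$ of $\Omega$; equivalently one may just cut off with $\eta_2$ restricted near the origin. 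Crucially, the cutoff touches only the lateral (artificial) boundary $\{x=\text{const}\}$, not the physical boundary $\{z\in\{0,1\}\}$, so $\bv$ still vanishes on $\partial\Omega'$ and the homogeneous Dirichlet Stokes theory applies verbatim on $\Omega'$.

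Next I would write down the equation satisfied by $(\bv,q)$ with $q:=\chi_0^*(p-\langle p\rangle_{\Omega'})$: a direct computation gives
\begin{equation*}
	-\Delta\bv + \nabla q = \chi_0^*\bf - 2\nabla\chi_0^*\cdot\nabla\bu - (\Delta\chi_0^*)\bu + (\nabla\chi_0^*)(p-\langle p\rangle_{\Omega'}) =: \bF \quad\text{in }\Omega',
\end{equation*}
together with $\div\,\bv = \nabla\chi_0^*\cdot\bu =: g$ in $\Omega'$ and $\bv|_{\partial\Omega'}=\bzero$. Here $\bF\in L^2(\Omega')$ and $g\in W^{1,2}(\Omega')$, and the compatibility condition $\int_{\Omega'} g = \int_{\Omega'}\div\,\bv = 0$ holds since $\bv$ vanishes on $\partial\Omega'$. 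Applying the estimate (\ref{eq:sest}) of Theorem \ref{thm:galdi} with $q=2$ on the fixed bounded $C^2$ domain $\Omega'$ yields
\begin{equation*}
	\|\bv\|_{H^2(\Omega')} \leq C\big(\|\bF\|_{L^2(\Omega')} + \|g\|_{H^1(\Omega')}\big) \leq C\big(\|\bf\|_{L^2(\Omega')} + \|\bu\|_{H^1(\Omega')} + \|p-\langle p\rangle_{\Omega'}\|_{L^2(\Omega')}\big).
\end{equation*}
The first two terms on the right are controlled by $\|\bf\|_{L^2_\uloc}$ and $\|\bu\|_{H^1_\uloc}\lesssim\|\bf\|_{L^2_\uloc}$ via Theorem \ref{thm:suloc} and Lemma \ref{lem:comp}; the pressure term is controlled by $\|\nabla p\|_{H^{-1}(\Omega')}\leq\|\Delta\bu+\bf\|_{H^{-1}(\Omega')}\leq C(\|\bu\|_{H^1_\uloc}+\|\bf\|_{L^2_\uloc})$ through Nečas' inequality, exactly as in the proof of Theorem \ref{thm:suloc}. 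Since $\bv=\bu$ on $U_0$, this gives $\|\bu\|_{H^2(U_0)}\leq C\|\bf\|_{L^2_\uloc}$ with $C$ independent of the slice, and translating in $x$ and taking the supremum over $k\in\Z$, together with Lemma \ref{lem:comp}, produces (\ref{eq:sulocell}).

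The main obstacle is purely bookkeeping rather than conceptual: one must make sure that every constant produced along the way — the Stokes constant on $\Omega'$, the Nečas constant, the cutoff bounds $\|\nabla\chi_0^*\|_{L^\infty}+\|\Delta\chi_0^*\|_{L^\infty}$, and the norm-equivalence constants of Lemma \ref{lem:comp} — is genuinely independent of $k$, which is automatic here because all the localized problems are exact translates of the single problem on $\Omega'$. A secondary point worth a line of care is that the pressure only lives in $L^2_\uloc$ up to slice-wise constants (as the Remark after Theorem \ref{thm:suloc} shows), so one must always work with $p-\langle p\rangle_{\Omega'}$ and never with $p$ itself; the subtraction is harmless because $\nabla\chi_0^*$ has zero mean is not needed — what is used is simply that adding a constant to $p$ does not change $\nabla p$, and Nečas' inequality bounds the zero-mean part.
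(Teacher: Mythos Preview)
Your proposal is correct and follows essentially the same route as the paper: localize $(\bu,p)$ by a smooth cutoff in the $x$-variable, observe that the truncated pair solves a Stokes system on a fixed bounded $C^2$ domain with right-hand side controlled by $\|\bf\|_{L^2_\uloc}+\|\bu\|_{H^1_\uloc}$ plus a zero-mean pressure term handled by Ne\v{c}as' inequality, apply Theorem~\ref{thm:galdi}, and translate. The only cosmetic differences are that the paper works directly with the cutoffs $\chi_k$ on the translated domains $\tilde U_k$ (all translates of a single $\tilde U_0\supset U_0^*$) rather than reducing to $k=0$ first, and uses a smaller cutoff equal to $1$ only on $U_k$; neither choice changes the argument.
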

		\begin{proof}
			The demonstration consists in truncating the global solution $\bu$ within some bounded subdomains, and to use the elliptic regularity in these bounded domains provided by Theorem \ref{thm:galdi}. Let $\bf \in L^2_\uloc(\Omega)$ and set $(\bu,p)\in H^1_\uloc(\Omega)\times \left(L^2_\loc(\Omega)/\R\right)$, the associated solution to (\ref{eq:S0}). For any $k \in \Z$, set $\bu_k := \chi_k \bu$ and $q_k := \chi_k (p-\langle p \rangle_{U_k^*})$, which satisfy the system
			\begin{equation} \label{eq:trunc}
				\left\{
				\begin{array}{rcll}
					-\Delta \bu_k + \nabla q_k &= &\bF_k &\text{in } \tilde{U}_k,\\
					\div\,\bu_k &= &\chi_k'u_1 &\text{in } \tilde{U}_k, \\
					{\bu_k} &= &\bzero & \text{on } \p \tilde{U}_k,
				\end{array}
				\right.
			\end{equation}
			where we set
			\begin{equation*}
				\bF_k := \chi_k \bf - 2\chi_k'\p_x\bu -\chi_k'' \bu + \chi_k'\left(p-\langle p \rangle_{U_k^*}\right) \be_x 
			\end{equation*}
			for any smooth bounded subdomain $\tilde{U}_k$ of $\Omega$ containing $U_k^*$. Let us set $( \tilde{U}_k )_k$ a family of such domains given by the choice of a smooth $\tilde{U}_0$ containing $U_0^*$ and its translations $\tilde{U}_k = \tilde{U}_0 + k\be_x$. The regularity of $\bu$ and $p$ implies that $\bF_k$ belongs to $L^2(\tilde{U}_k)$ and that $\chi_k'u_1$ satisfies the compatibility condition (\ref{eq:compcond}).  Therefore, Theorem \ref{thm:galdi} ensures that $\bu_k$ is the only solution of (\ref{eq:trunc}) on $\tilde{U}_k$, with estimate
			\begin{equation} \label{eq:locest}
				\|\bu_k\|_{H^2(\tilde{U}_k)} \leq C\|\bF_k\|_{L^2(\tilde{U}_k)},
			\end{equation}
			where the constant $C>0$ can be chosen independent of $k$ since the subdomains $\tilde{U}_k$ are translations of each other. A few computations lead to
			\begin{equation} \label{eq:Fk}
				\|\bF_k\|_{L^2(\tilde{U}_k)} \leq C ( \|\bu\|_{H^1_\uloc} + \|\bf\|_{L^2_\uloc} + \|p-\langle p\rangle_{U_k^*} \|_{L^2(U_k^*)}).
			\end{equation}
			Ne\v{c}as inequality and Lemma \ref{lem:gluloc} provide
			\begin{align*}
				\|p-\langle p\rangle_{U_k^*} \|_{L^2(U_k^*)} &\leq C\|\nabla p\|_{H^{-1}(U_k^*)} \\
					&\leq C\|\Delta \bu + \bf\|_{H^{-1}(U_k^*)} \\
					&\leq C(\|\bu\|_{H^1_\uloc} + \|\bf\|_{L^2_\uloc}).
			\end{align*}
			The latter estimate combined with (\ref{eq:suloc}) and (\ref{eq:Fk}) in (\ref{eq:locest}) leads to
			\begin{equation*}
				\|\bu\|_{H^2(U_k)} \leq \|\bu_k\|_{H^2(\tilde{U}_k)} \leq C\|\bf\|_{L^2_\uloc},
			\end{equation*}
			which proves that $\bu$ belongs to $H^2_\uloc(\Omega)$ and satisfies inequality (\ref{eq:sulocell}).
		\end{proof}
		From the latter result and Sobolev embeddings we obtain existence of $W^{1,q}_\uloc$ solutions for $L^q$ data. Then elliptic regularity is once again recovered and we show that these solutions are in $W^{2,q}_\uloc$. Finally Sobolev embeddings once again yield a unique solution in $W^{1,\infty}$ for $L^\infty$ data.
		\begin{theorem} \label{thm:gwplip}
			Let $\bf \in L^\infty(\Omega)$. There exists a unique $\bu \in W^{1,\infty}(\Omega)$ satisfying (\ref{eq:S0}), which obeys
			\begin{equation} \label{eq:suloclip}
				\|\bu\|_{W^{1,\infty}} \leq C\|\bf\|_{L^\infty}.
			\end{equation}
		\end{theorem}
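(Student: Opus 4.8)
The plan is to bootstrap from Theorem \ref{thm:ellreg} through a finite number of integrability steps, exactly as announced in the text preceding the statement. I would first establish the intermediate claim: for every $2 \le q < \infty$ and every $\bf \in L^q_\uloc(\Omega)$, the solution $\bu \in H^1_\uloc(\Omega)$ of (\ref{eq:S0}) actually lies in $W^{2,q}_\uloc(\Omega)$ with $\|\bu\|_{W^{2,q}_\uloc} \le C\|\bf\|_{L^q_\uloc}$, and in particular $\bu \in W^{1,q}_\uloc$. The argument is a $W^{k,q}_\uloc$-version of the proof of Theorem \ref{thm:ellreg}: localize by the partition $\chi_k$, so that $\bu_k := \chi_k \bu$ and $q_k := \chi_k(p - \langle p\rangle_{U_k^*})$ solve the Stokes system (\ref{eq:trunc}) on the fixed translated bounded $C^2$ domain $\tilde U_k$, apply part 1 of Theorem \ref{thm:galdi} (Galdi) on $\tilde U_k$ with the constant uniform in $k$, and control $\|\bF_k\|_{L^q(\tilde U_k)}$ and $\|\chi_k' u_1\|_{W^{1,q}(\tilde U_k)}$ by $\|\bu\|_{W^{1,q}_\uloc} + \|\bf\|_{L^q_\uloc} + \|p - \langle p\rangle_{U_k^*}\|_{L^q(U_k^*)}$, the pressure term being handled by a Ne\v{c}as-type inequality on $U_k^*$ bounding $\|p - \langle p\rangle_{U_k^*}\|_{L^q}$ by $\|\nabla p\|_{W^{-1,q}(U_k^*)} \lesssim \|\nabla\bu\|_{L^q(U_k^*)} + \|\bf\|_{W^{-1,q}(U_k^*)}$, which is itself controlled once we already know $\bu \in W^{1,q}_\uloc$.

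With that intermediate result in hand I would run the bootstrap as a finite induction on a sequence of exponents. Start from $q_0 = 2$: by Theorem \ref{thm:ellreg}, $L^\infty(\Omega) \subset L^2_\uloc(\Omega)$ gives $\bu \in H^2_\uloc(\Omega)$. The Sobolev embedding $H^2_\uloc(\Omega) \hookrightarrow W^{1,q_1}_\uloc(\Omega)$ — valid because the embedding holds uniformly on each cell $U_k^*$ of fixed bounded geometry, with $q_1$ any exponent up to $\infty$ in dimension $d = 2$ (and up to $6$ in $d = 3$), so in either case we may take some $q_1 > 2$ — upgrades the integrability of $\nabla \bu$, hence of $\bu$, and since $\bf \in L^\infty \subset L^{q_1}_\uloc$ the intermediate claim gives $\bu \in W^{2,q_1}_\uloc(\Omega)$. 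Iterating, each round raises the exponent; since $d \le 3$ this reaches the regime where $W^{2,q}_\uloc(\Omega) \hookrightarrow W^{1,\infty}(\Omega)$ (concretely $q = 4$ suffices, using $W^{2,4} \hookrightarrow W^{1,\infty}$ for $d = 2,3$ as already invoked in the Corollary of Theorem \ref{thm:galdi}) after finitely many steps, and $\bf \in L^\infty \subset L^4_\uloc$, yielding $\bu \in W^{1,\infty}(\Omega)$ together with the estimate $\|\bu\|_{W^{1,\infty}} \le \|\bu\|_{W^{2,4}_\uloc} \le C\|\bf\|_{L^4_\uloc} \le C\|\bf\|_{L^\infty}$, which is (\ref{eq:suloclip}).

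For uniqueness: any $\bu \in W^{1,\infty}(\Omega)$ solving (\ref{eq:S0}) with $\bf = \bzero$ lies in $H^1_\uloc(\Omega)$ (its uniformly local $H^1$ norm is dominated by its $W^{1,\infty}$ norm times the fixed measure of a cell), so it coincides with the unique $H^1_\uloc$ solution furnished by Theorem \ref{thm:suloc}, which is $\bzero$; linearity then gives uniqueness for general $\bf$. The main obstacle I anticipate is purely bookkeeping rather than conceptual: making sure every constant (Galdi's elliptic estimate, the Ne\v{c}as inequality, the Sobolev embeddings, the norm equivalences of Lemma \ref{lem:comp}) is genuinely uniform in the cell index $k$, which is guaranteed here only because all the relevant subdomains $U_k$, $U_k^*$, $\tilde U_k$ are translates of fixed $C^2$ sets — so the key point to state carefully is translation invariance of the strip geometry. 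A minor secondary point is to verify that the divergence datum $\chi_k' u_1$ in (\ref{eq:trunc}) still satisfies the compatibility condition (\ref{eq:compcond}) and has the $W^{1,q}$ regularity required by Theorem \ref{thm:galdi}, which follows from $u_1 \in W^{1,q}_\uloc$ once that is known and from the zero-flux condition exactly as in the proof of Theorem \ref{thm:suloc}.
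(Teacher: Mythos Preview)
Your proposal is correct and follows essentially the same route as the paper: obtain $\bu \in H^2_\uloc$ from Theorem \ref{thm:ellreg}, embed into $W^{1,q}_\uloc$, rerun the localization argument of Theorem \ref{thm:ellreg} with Galdi's $L^q$ estimate and the $L^q$ Ne\v{c}as inequality to reach $W^{2,q}_\uloc$, then use $W^{2,4}_\uloc \hookrightarrow W^{1,\infty}$ and reduce uniqueness to the $H^1_\uloc$ theory. The only cosmetic difference is that you phrase the integrability gain as a finite bootstrap while the paper, exploiting that the strip is two-dimensional so that $H^2_\uloc \hookrightarrow W^{1,q}_\uloc$ already holds for every finite $q$, reaches $q=4$ in a single step; your version would be needed verbatim in a three-dimensional analogue but collapses to one iteration here.
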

		\begin{proof}
			Let $\bf \in L^\infty(\Omega)$. We always have
			\begin{equation*}
				\|\bf\|_{L^2_\uloc} \leq C \|\bf\|_{L^\infty}.
			\end{equation*}
			The Sobolev embeddings in bounded domains adapts into the continuous inclusion
			\begin{equation*}
				H^2_\uloc(\Omega) \hookrightarrow W^{1,q}_\uloc(\Omega), \quad 2 \leq q < \infty.
			\end{equation*}
			Since Theorem \ref{thm:ellreg} ensures the existence of a solution to (\ref{eq:S0}) in $H^2_\uloc(\Omega)$, we also have existence of a solution in $W^{1,q}_\uloc(\Omega)$. Besides, the inclusions of Lebesgue spaces imply
			\begin{equation*}
				W^{1,q}_\uloc(\Omega) \hookrightarrow H^1_\uloc(\Omega), \quad 2\leq q < \infty.
			\end{equation*}
			Hence, the uniqueness of a solution in $H^1_\uloc$, ensured by Theorem \ref{thm:suloc}, implies that there exists at most one solution of (\ref{eq:S0}) in $W^{1,q}_\uloc(\Omega)$. In the end, (\ref{eq:S0}) admits a unique solution $\bu \in W^{1,q}_\uloc(\Omega)$ for $2 \leq q < \infty$, with estimate
			\begin{equation*}
				\|\bu\|_{W^{1,q}_\uloc} \leq C_q \|\bf\|_{L^\infty}.
			\end{equation*}
			Now the method is exactly the same as in Theorem \ref{thm:ellreg} to prove that $\bu$ belongs to $W^{2,q}_\uloc(\Omega)$. To do so, the only extra result we require is Ne\v{c}as inequality in the general $L^q$ framework, see \cite[Ex. III.3.4, p. 175]{galdi}, which provides the very same pressure estimates as for $q=2$. In the end we obtain the well-posedness of the problem in $W^{2,q}_\uloc(\Omega)$, with estimate
			\begin{equation*}
				\|\bu\|_{W^{2,q}_\uloc} \leq C \|\bf\|_{L^\infty}, \quad 2\leq q<\infty.
			\end{equation*}
			Now use that
			\begin{equation*}
				W^{2,4}_\uloc(\Omega) \hookrightarrow W^{1,\infty}(\Omega) \hookrightarrow W^{1,4}_\uloc(\Omega).
			\end{equation*}
			As previously, the first embedding provides existence of a solution $\bu \in W^{1,\infty}(\Omega)$, together with estimate (\ref{eq:suloclip}), and the second one ensures uniqueness.
		\end{proof}
	\subsection{Stability estimate for the transport in the strip} \label{sbsec:stabstrip}
	
	The transport equation (\ref{eq:t}) is still well-posed on $\Omega = \R\times(0,1)$ and Proposition \ref{prop:t} still applies. The lemmas related to the properties of the characteristics are also valid still, up to minor adaptations mentioned when required in the following. The only adaptation demanding particular attention is the stability estimate from Proposition \ref{prop:stabb}, stated as follows.
	
	\begin{proposition} \label{prop:stabubd}
		Let $\bu_i \in L^\infty(\R_+;W^{1,\infty}(\Omega))$ with ${\bu_i}|_{\p\Omega} \equiv \bzero$ and $\div \, \bu_i \equiv 0$, for $i= 1,2$. Let $\rho_0 \in L^\infty(\Omega)$ and set $\rho_i$ the solution of (\ref{eq:t}) associated to $\bu_i$ with initial datum $\rho_0$. There exists $\bar{T}(\|\nabla \bu_i\|_{L^\infty}) > 0$ such that for any $T \in [0,\bar{T}]$,
		\begin{equation*} \|\rho_1-\rho_2\|_{L^\infty(0,T;H^{-1}_\uloc)}  \\\leq BT(1+MT)^{1/2}e^{CT\|\nabla\bu_1\|_{L^\infty(0,T;L^\infty)}}\|\bu_1-\bu_2\|_{L^\infty(0,T;L^2_\uloc)},
		\end{equation*}
		where $B :=  C\|\rho_0\|_{L^\infty}$ and $M:= \max_i \|\bu_i\|_{L^\infty}$.
	\end{proposition}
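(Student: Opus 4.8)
The plan is to rerun the proof of Proposition \ref{prop:stabb} with $q=q'=2$, the only genuinely new ingredient being a \emph{localised} version of the characteristic stability estimate of Lemma \ref{lem:stabchar}. In the uniformly local topology each integral occurring in the argument must be restricted to a bounded slice of the strip whose width grows only affinely in $TM$, with coefficients independent of the translation index; this is what allows one to convert an $L^2_\uloc$ bound on $\bu_1-\bu_2$ into a genuine $L^2$ bound at the cost of a factor $(1+TM)^{1/2}$, and it is the source of the square root in the statement.

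By Lemma \ref{lem:comp} it suffices to bound $\langle\rho_1(t)-\rho_2(t),\vphi\rangle$ uniformly in $k\in\Z$ for $\vphi\in C^\infty_\rc(U_k^*)$ with $\|\vphi\|_{H^1(\Omega)}\leq1$. As in the proof of Proposition \ref{prop:stabb}, the Liouville theorem (using $\div\,\bu_i\equiv\bzero$) followed by a first-order Taylor formula along the segment $X^\theta(t,x):=\theta X_1(t,x)+(1-\theta)X_2(t,x)$ gives
\[
\langle\rho_1(t)-\rho_2(t),\vphi\rangle=\int_\Omega\rho_0(x)\,\big(X_1(t,x)-X_2(t,x)\big)\cdot\int_0^1\nabla\vphi(X^\theta(t,x))\ud\theta\ud x .
\]
Since $|X_i(t,x)-x|\leq t\|\bu_i\|_{L^\infty(0,t;L^\infty)}\leq tM$ by Duhamel, the same bound holds for $X^\theta$, so the integrand vanishes outside the slice $V_k:=\Omega\cap\{\,k-1-tM<x_1<k+2+tM\,\}$, whose width $3+2tM$ does not depend on $k$. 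Hölder's inequality then yields
\[
\big|\langle\rho_1(t)-\rho_2(t),\vphi\rangle\big|\leq\|\rho_0\|_{L^\infty}\,\|X_1(t)-X_2(t)\|_{L^2(V_k)}\int_0^1\|\nabla\vphi(X^\theta(t))\|_{L^2(\Omega)}\ud\theta ,
\]
and, exactly as for (\ref{eq:hm}), provided $t\leq\bar{T}(\|\nabla\bu_i\|_{L^\infty})$ the maps $X^\theta(t)$ are bi-Lipschitz with Jacobian determinant bounded above and below uniformly in $\theta$ and $t$, whence $\|\nabla\vphi(X^\theta(t))\|_{L^2(\Omega)}\leq C\|\nabla\vphi\|_{L^2(\Omega)}\leq C$.

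It remains to prove the localised bound $\|X_1(t)-X_2(t)\|_{L^2(V_k)}\leq Ct(1+tM)^{1/2}e^{Ct\|\nabla\bu_1\|_{L^\infty(0,t;L^\infty)}}\|\bu_1-\bu_2\|_{L^\infty(0,t;L^2_\uloc)}$ with $C$ independent of $k$. Freezing the final time $t$ and writing, for $\tau\in[0,t]$, the Duhamel identity for $X_1(\tau)-X_2(\tau)$ on the fixed slice $V_k$, one splits it as in Lemma \ref{lem:stabchar} into a term controlled by $C\|\nabla\bu_1\|_{L^\infty(0,\tau;L^\infty)}\int_0^\tau\|X_1-X_2\|_{L^2(V_k)}$ and the term $\int_0^\tau\|(\bu_1-\bu_2)(\sigma,X_2(\sigma,\cdot))\|_{L^2(V_k)}\ud\sigma$. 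For the latter the change of variables $y=X_2(\sigma,x)$, of Jacobian $1$ by the Liouville theorem, turns the integrand into $\|(\bu_1-\bu_2)(\sigma)\|_{L^2(X_2(\sigma,V_k))}$; and $X_2(\sigma,V_k)$, being contained in a slice of width at most $3+4tM$, is covered by at most $C(1+tM)$ of the cells $U_\ell$, so this quantity is $\leq C(1+tM)^{1/2}\|\bu_1-\bu_2\|_{L^2_\uloc}$ by Lemma \ref{lem:comp}. Gronwall's inequality then gives the localised bound. Inserting it back, taking the supremum over $\vphi$, over $k$, and finally over $t\in[0,T]$ (the bound being nondecreasing in $t$), yields the estimate of the proposition with $B=C\|\rho_0\|_{L^\infty}$.

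The only delicate point is this bookkeeping of supports: one must check that every slice appearing — $V_k$, its image under the flow, and the set on which $X^\theta(t,\cdot)$ can reach $U_k^*$ — has width bounded by an affine function of $tM$ with $k$-independent coefficients, for it is precisely the square root of that width that becomes the factor $(1+TM)^{1/2}$; a coarser estimate would produce $(1+TM)$ instead. Everything else is a transcription of the bounded-domain proof of Proposition \ref{prop:stabb}, the characteristics theory and the Liouville theorem being insensitive to the unboundedness of the strip.
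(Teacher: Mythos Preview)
Your proposal is correct and follows essentially the same route as the paper. Both arguments test against functions supported in $U_k^*$ (the paper uses $\chi_k\vphi$, you use $\vphi\in C^\infty_\rc(U_k^*)$, equivalent by Lemma~\ref{lem:comp}), localise the Taylor integral to a slice of width $3+2tM$ via the finite-speed bound $|X_i(t,x)-x|\leq tM$, invoke the bi-Lipschitz property of $X^\theta(t)$ from Proposition~\ref{prop:stabb} to control $\|\nabla\vphi(X^\theta(t))\|_{L^2}$, and then bound $\|X_1(t)-X_2(t)\|_{L^2}$ on the slice; the only cosmetic difference is that the paper cites Lemma~\ref{lem:stabchar} directly on the slice $U_k^{M,t}$ and afterwards converts $\|\bu_1-\bu_2\|_{L^2(U_k^{M,t})}$ to $L^2_\uloc$ via Lemma~\ref{lem:gluloc}, whereas you rerun the Gronwall step and track $X_2(\sigma,V_k)$ explicitly---your version is in fact slightly more careful about the image domain, but the two arguments are the same in substance.
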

	
	\begin{proof}
		The goal is to bound the following quantity for any test function $\vphi \in C^\infty_\rc(\Omega)$ uniformly in $k \in \Z$ and with respect to $t \in [0,\bar{T}]$ where $\bar{T}$ is determined further. To apply Liouville theorem gives
		\begin{align*}
			 I_{\vphi,k} &:= \int_\Omega \big( \rho_1(t,x) - \rho_2(t,x)\big) (\chi_k\vphi)(x) \ud x \\
				&= \int_\Omega \rho_0(x) \big( (\chi_k\vphi)(X_1(t,x)) - (\chi_k\vphi)(X_2(t,x))\big) \ud x \\
				&= \int_\Omega \rho_0(x) (X_1(t,x) - X_2(t,x)) \cdot \int_0^1 \nabla(\chi_k\vphi) (X^\theta(t,x)) \ud \theta \ud x,
		\end{align*}
		where $X^\theta(t,x) := \theta X_1(t,x) + (1-\theta) X_2(t,x)$. Since $\rho_i$ are the push-forwards of $\rho_0$ by $X_i$, the respective transports occur at finite speed, bounded by $M$. Since $\chi_k$ is supported in $U_k^*$, the support of $(\chi_k\vphi)\circ X_i(t)$ is included in
		\begin{equation*}
			U_k^{M,t} := \{ k - 1 - Mt < x < k + 2 + Mt \}.
		\end{equation*}
		Hence Hölder's inequality applies as follows
		\begin{equation*}
			|I_{\vphi,k}| \leq \|\rho_0\|_{L^\infty}\|X_1(t)-X_2(t)\|_{L^2(U_k^{M,t})}\int_0^1 \|\nabla (\chi_k\vphi)(X^\theta(t))\|_{L^2} \ud \theta.
		\end{equation*}
		We saw in Proposition \ref{prop:stabb} that there exists $\bar{T}(\|\nabla\bu_i\|_{L^\infty}) > 0$ such that $X^{\theta}(t)$ performs a change of variable with jacobian determinant uniformly bounded with respect to $t \in [0,\bar{T}]$ and $\theta \in [0,1]$, meaning there exists a constant $C>0$ such that
		\begin{equation*}
			\forall t \in [0,\bar{T}], \theta \in [0,1], \qquad	\| \nabla(\chi_k\vphi)(X^\theta(t))\|_{L^2} \leq C \|\nabla(\chi_k\vphi)\|_{L^2} \leq C_\chi \|\vphi\|_{H^1}.
		\end{equation*}
		Besides, Lemma \ref{lem:stabchar} applies on the bounded domain $U_k^{M,t}$, providing
		\begin{equation*}
			\|X_1(t) - X_2(t)\|_{L^2(U_k^{M,t})} \leq te^{Ct\|\nabla\bu_1\|_{L^\infty(0,t;L^\infty)}}\|\bu_1-\bu_2\|_{L^\infty(0,t;L^2(U_k^{M,t}))}.
		\end{equation*}
		From considerations similar to those of Lemma \ref{lem:gluloc} we have
		\begin{equation*}
			\|\bu_1-\bu_2\|_{L^\infty(0,t;L^2(U_k^{M,t}))} \leq C(1+Mt)^{1/2}\|\bu_1-\bu_2\|_{L^\infty(0,t;L^2_\uloc)},
		\end{equation*}
		with finite right hand side, since $\bu_i \in L^\infty(\R_+;W^{1,\infty}(\Omega))$. Combining these last equalities lead to
		\begin{equation*}
		\forall t \in [0,\bar{T}],\quad	|I_{\vphi,k}| \leq C\|\rho_0\|_{L^\infty} t (1+Mt)^{1/2}e^{Ct\|\nabla\bu_1\|_{L^\infty(\R_+;L^\infty)}}\|\bu_1-\bu_2\|_{L^\infty(0,t;L^2_\uloc)} \|\vphi\|_{H^1}.
		\end{equation*}
		Taking the supremum over the test functions, $k \in \Z$ and $t \in [0,T]$, we see that for any $T \in [0,\bar{T}]$ we have
		\begin{equation*} \|\rho_1-\rho_2\|_{L^\infty(0,T;H^{-1}_\uloc)} \leq BT(1+MT)^{1/2}e^{CT\|\nabla\bu_1\|_{L^\infty(0,T;L^\infty)}}\|\bu_1-\bu_2\|_{L^\infty(0,T;L^2_\uloc)}.
		\end{equation*}
	\end{proof}
	
	\subsection{Proof of Theorem \ref{thm:2}} \label{sbsec:proofstrip}
	
		The proof essentially follows the same path as in Theorem \ref{thm:1}. For this reason we recall briefly the similar steps and focus on the parts that differ from this former case.
		
		\paragraph{Local existence : } Set $\rho^0 \equiv \rho_0$ in $L^\infty(\R_+;L^\infty(\Omega))$. Define, thanks to Proposition \ref{prop:t} and Theorem \ref{thm:gwplip}, the following sequences
		\begin{equation*}
		\forall N \in \N, \quad \rho^N \in L^\infty(\R_+;L^\infty(\Omega)), \quad \bu^N \in L^\infty(\R_+,W^{1,\infty}(\Omega)),
		\end{equation*}
		satisfying the partial problems
		\begin{equation} \label{eq:Tlocn}
			\left\{
			\begin{array}{rcll}
			\p_t\rho^{N+1} +\bu^N\cdot\nabla\rho^{N+1} &= &0 &\text{in } \R_+\times\Omega, \\
			\rho^{N+1}(0,\cdot) &= &\rho_0 &\text{in } \Omega,
			\end{array}
			\right.
		\end{equation}
		and
		\begin{equation} \label{eq:Stolocn}
		\left\{
		\begin{array}{rcll}
		-\Delta \bu^N + \nabla p^N &= &-\rho^N\be_z &\text{in } \R_+\times\Omega, \\
		\div \, \bu^N &= &0 &\text{in } \R_+\times\Omega, \\
		{\bu^N} &= &\bzero &\text{in } \R_+\times\p\Omega, \\
		\int u_1^N &= &0 &\text{in } \R_+.\\
		\end{array}
		\right.
		\end{equation}
		The uniforms bounds, with $B:= C\|\rho_0\|_{L^\infty}$, remain true,
		\begin{equation} \label{eq:unesloc}
			\|\rho^N\|_{L^\infty(\R_+,L^\infty)} \leq B, \quad \|\bu^N\|_{L^\infty(\R_+;W^{1,\infty})} \leq B.
		\end{equation}
		Therefore we still have $\mathrm{weak}^*$ convergence of $\rho^N, \bu^N$ and $\nabla \bu^N$, up to the extraction of subsequences. Beside, Proposition \ref{prop:stabubd} ensures the existence of a $\bar{T}(\|\rho_0\|_{L^\infty}) > 0$ such that for any $T \in [0,\bar{T}]$,
		\begin{equation*}
			\|\rho^{N+1}-\rho^N\|_{L^\infty(0,T;H^{-1}_\uloc)} \leq BT(1+BT)^{1/2}e^{BT}\|\rho^N-\rho^{N+1}\|_{L^\infty(0,T;H^{-1}_\uloc)},
		\end{equation*}
		where we have plugged (\ref{eq:unesloc}). Therefore, up to the choice of a small enough $T>0$, $(\rho^N)_N$ is a Cauchy sequence in $L^\infty(0,T;H^{-1}_\uloc(\Omega))$, which implies that $(\bu^N)_N$ is also a Cauchy sequence in $L^\infty(0,T;H^1_\uloc(\Omega))$, with limit denoted $\bu$. The $\mathrm{weak}^*$ convergence of $(\bu^N)_N$ and $(\nabla \bu^N)_N$ ensures that $\bu$ also belongs to $L^\infty(0,T;W^{1,\infty}(\Omega))$. In particular, $\bu^N$ and its derivatives converge in $L^1_\loc([0,T]\times\Omega)$, which, together with the $\mathrm{weak}^*$ convergence of $(\rho^N)_N$, is enough to pass to the limit in the weak formulation of partial problems (\ref{eq:Tlocn}) and (\ref{eq:Stolocn}). We obtain a local in time solution $(\rho,\bu)$ of (\ref{eq:S0}), with regularity
		\begin{equation*}
			 L^\infty(0,T;L^\infty(\Omega))\times L^\infty(0,T;W^{1,\infty}(\Omega)).
		\end{equation*}
		
		\paragraph{Local uniqueness :} Let $(\rho_i,\bu_i)$ be two such solutions of (\ref{eq:S0}). The contraction adapts thanks to Proposition \ref{prop:stabubd} in
		\begin{equation*}
		 \|\rho_1-\rho_2\|_{L^\infty(0,T;H^{-1}_\uloc)} \leq BT(1+BT)^{1/2}e^{BT}\|\rho_1-\rho_2\|_{L^\infty(0,T;H^{-1}_\uloc)},
		\end{equation*}
		which implies uniqueness for $T >0$ small enough. 
		
		\paragraph{Globality :} The extension proves just as in the bounded case, see the proof of Theorem \ref{thm:1}.
		
		\subsection{Stability estimate for the system in the strip} \label{sbsec:stabstripsys}
			The result and the proof are identical to the ones of Proposition \ref{prop:stabsys}, replacing the stability estimate (\ref{eq:st0}) of the bounded case by the one in the strip from Proposition \ref{prop:stabubd}.
			\begin{proposition} \label{prop:stabstrip}
				Let $\rho_{0,i} \in L^\infty(\Omega)$ and $\rho_i$ be the solution of (\ref{eq:st}) with initial datum $\rho_{0,i}$, for $i = 1,2$. There exists $C = C(\Omega, \|\rho_{0,i}\|_{L^\infty}) > 0$ such that
				\begin{equation} \label{eq:stabst}
				\forall T \in \R_+, \quad \|\rho_1-\rho_2\|_{L^\infty(0,T;H^{-1}_\uloc)} \leq Ce^{CT} \|\rho_{0,1}-\rho_{0,2}\|_{H^{-1}_\uloc}.
				\end{equation}
			\end{proposition}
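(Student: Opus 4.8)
The plan is to transcribe the proof of Proposition~\ref{prop:stabsys} to the uniformly local setting, using Proposition~\ref{prop:stabubd} in place of the bounded-domain transport stability estimate. I would write $\bu_i$ for the velocity field associated with $\rho_i$ through the Stokes problem~(\ref{eq:S0}), and introduce the auxiliary density $\rho_{1,2}$, the solution of~(\ref{eq:t}) with initial datum $\rho_{0,1}$ and velocity field $\bu_2$. Then the triangle inequality reads
\begin{equation*}
	\|\rho_1 - \rho_2\|_{L^\infty(0,T;H^{-1}_\uloc)} \leq \underbrace{\|\rho_1 - \rho_{1,2}\|_{L^\infty(0,T;H^{-1}_\uloc)}}_{I_1} + \underbrace{\|\rho_{1,2} - \rho_2\|_{L^\infty(0,T;H^{-1}_\uloc)}}_{I_2}.
\end{equation*}
The term $I_1$ compares transported densities with equal initial data but distinct velocity fields, so Proposition~\ref{prop:stabubd} bounds it, for $T$ below some $\bar T = \bar T(\|\nabla\bu_i\|_{L^\infty})$, by $BT(1+MT)^{1/2}e^{CT\|\nabla\bu_1\|_{L^\infty}}\|\bu_1 - \bu_2\|_{L^\infty(0,T;L^2_\uloc)}$. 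The Stokes estimate~(\ref{eq:suloc}) applied to the difference $\bu_1-\bu_2$ (solution of~(\ref{eq:S0}) with source $-(\rho_1-\rho_2)\be_z$), together with $H^1_\uloc \hookrightarrow L^2_\uloc$, gives $\|\bu_1 - \bu_2\|_{L^\infty(0,T;L^2_\uloc)} \leq C\|\rho_1 - \rho_2\|_{L^\infty(0,T;H^{-1}_\uloc)}$; meanwhile~(\ref{eq:suloclip}) bounds $\|\nabla\bu_1\|_{L^\infty}$ and $M := \max_i\|\bu_i\|_{L^\infty}$ by $B := C\max_i\|\rho_{0,i}\|_{L^\infty}$. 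Hence $I_1 \leq BT(1+BT)^{1/2}e^{BT}\|\rho_1 - \rho_2\|_{L^\infty(0,T;H^{-1}_\uloc)}$.

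For $I_2$, the two densities solve the same transport equation (with velocity $\bu_2$) from different data, so Liouville's theorem gives, for $\vphi \in C^\infty_\rc(\Omega)$ and $k \in \Z$,
\begin{equation*}
	\int_\Omega \big(\rho_{1,2}(t,x) - \rho_2(t,x)\big)(\chi_k\vphi)(x)\ud x = \int_\Omega \big(\rho_{0,1}(x) - \rho_{0,2}(x)\big)(\chi_k\vphi)(X_2(t,x))\ud x.
\end{equation*}
As in the proof of Proposition~\ref{prop:stabubd}, the finite propagation speed $M$ confines the support of $(\chi_k\vphi)\circ X_2(t)$ to a window of width $\lesssim 1+Mt$; pairing against $\rho_{0,1}-\rho_{0,2}$ on that window and invoking the analogue of Lemma~\ref{lem:gluloc} for the $H^{-1}$ norm over a set of controlled width bounds the right-hand side by $C(1+Mt)^{1/2}\|\rho_{0,1}-\rho_{0,2}\|_{H^{-1}_\uloc}\,\|(\chi_k\vphi)\circ X_2(t)\|_{H^1}$. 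Estimate~(\ref{eq:char40}) and the Jacobian and Lipschitz bounds for $X_2(t)$ on $[0,\bar T]$ recalled in the proof of Proposition~\ref{prop:stabb} (with $\|\nabla X_2\|_{L^\infty} \lesssim e^{Bt}$ from Lemma~\ref{lem:stabchar}) give $\|(\chi_k\vphi)\circ X_2(t)\|_{H^1} \leq Ce^{Bt}\|\vphi\|_{H^1}$. Taking suprema over $\vphi$, over $k$, and over $t \in [0,T]$, and absorbing $(1+MT)^{1/2}$ into the constant since $T \leq \bar T$, yields $I_2 \leq Ce^{BT}\|\rho_{0,1}-\rho_{0,2}\|_{H^{-1}_\uloc}$.

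Adding the two estimates, for $T \in [0,\bar T]$,
\begin{equation*}
	\|\rho_1 - \rho_2\|_{L^\infty(0,T;H^{-1}_\uloc)} \leq BT(1+BT)^{1/2}e^{BT}\|\rho_1 - \rho_2\|_{L^\infty(0,T;H^{-1}_\uloc)} + Ce^{BT}\|\rho_{0,1}-\rho_{0,2}\|_{H^{-1}_\uloc}.
\end{equation*}
Fixing $\bar T = \bar T(B)$ small enough that $BT(1+BT)^{1/2}e^{BT} < 1$ on $[0,\bar T]$, one absorbs the first term and obtains $\|\rho_1-\rho_2\|_{L^\infty(0,\bar T;H^{-1}_\uloc)} \leq \bar C\|\rho_{0,1}-\rho_{0,2}\|_{H^{-1}_\uloc}$ with $\bar C = \bar C(\Omega,\|\rho_{0,i}\|_{L^\infty})$. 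Because the flow of $\bu_i$ preserves Lebesgue measure, $\|\rho_i(t)\|_{L^\infty} \equiv \|\rho_{0,i}\|_{L^\infty}$, so the same $\bar T$ works when restarting from $\rho_i(\bar T)$; iterating over $\lceil T/\bar T\rceil$ consecutive windows yields $\|\rho_1-\rho_2\|_{L^\infty(0,T;H^{-1}_\uloc)} \leq \bar C^{\lceil T/\bar T\rceil}\|\rho_{0,1}-\rho_{0,2}\|_{H^{-1}_\uloc} \leq Ce^{CT}\|\rho_{0,1}-\rho_{0,2}\|_{H^{-1}_\uloc}$, which is the claimed estimate. The only genuinely new point relative to the bounded case is the bound on $I_2$: I expect the main work to be checking that the $H^{-1}_\uloc \to H^{-1}$ comparison over the time-growing support windows, and the change-of-variables bound for the $H^1$ norm of $(\chi_k\vphi)\circ X_2(t)$, both hold with constants uniform in $k$ and in $t \in [0,\bar T]$; the rest is a line-by-line copy of the proof of Proposition~\ref{prop:stabsys}.
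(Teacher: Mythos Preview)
Your proposal is correct and follows exactly the approach the paper intends: the paper's own proof of Proposition~\ref{prop:stabstrip} consists of a single sentence stating that one repeats the proof of Proposition~\ref{prop:stabsys} verbatim, substituting the strip stability estimate of Proposition~\ref{prop:stabubd} for Proposition~\ref{prop:stabb}. You have in fact written out more detail than the paper does, in particular the adaptation of the $I_2$ bound to the $H^{-1}_\uloc$ setting via the cutoffs $\chi_k$ and the finite-propagation-speed localisation, which the paper leaves implicit.
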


	\appendix
	\section{Appendix}
		\subsection{Proof of Lemma \ref{lem:comp}} \label{ap:comp}
			 Let $m \in \N, k \in \Z, 1 < q < \infty$ and $u \in W^{m,q}_\uloc(\Omega)$. Since $\chi_k = 1$ on $U_k$,
			\[ \|u\|_{W^{m,q}(U_k)} \leq \|\chi_k u\|_{W^{m,q}}. \]
			The support of $\chi_k$ being $U_k^*$, one has
			\[ \|\chi_k u\|_{W^{m,q}} \leq C_{\chi,m} \|u\|_{W^{m,q}(U_k^*)}.\]
			One can split this last norm as follows
			\[ \|u\|_{W^{m,q}(U_k^*)} \leq \sum_{\ell = k-1}^{k+1}\|u\|_{W^{m,q}(U_\ell)}. \]
			These three inequalities prove the first assertion,
			\begin{equation*}
			\sup_{k\in\Z} \|u\|_{W^{m,q}(U_k)} \simeq \sup_{k\in\Z} \|u\|_{W^{m,q}(U_k^*)} \simeq \|u\|_{W^{m,q}_\uloc}.
			\end{equation*}
			
			Let $m \in \N^*$ and $1<q<\infty$. For readability we adopt the following notations in the rest of this proof. For any $u \in W^{-m,q}(U)$ and $\vphi \in W^{m,q'}_0(U)$ where $U$ is a subdomain of $\Omega$, denote
			\begin{equation*}
			\|u\|_U := \|u\|_{W^{-m,q}(U)}, \quad \|u\|_\uloc := \|u\|_{W^{-m,q}_\uloc}, \qquad \|\vphi\|_{0,U} = \|\vphi\|_{W^{m,q'}_0(U)}
			\end{equation*}
			and the duality brackets 
			\begin{equation*}
			\langle u, \vphi \rangle_U = \langle u, \vphi\rangle_{W^{-m,q}(U),W^{m,q'}_0(U)}.
			\end{equation*}
			The inclusion $\{ \vphi : \|\vphi\|_{0,U_k} = 1 \} \subset \{ \vphi : \|\vphi\|_{0,U_k^*} = 1 \}$ provides the first inequality
			\begin{equation*}
			\|u\|_{W^{-m,q}(U_k)} \leq \|u\|_{W^{-m,q}(U_k^*)}.
			\end{equation*}
			Let us show the remaining direct inequality. We use the definition of the dual norm, and recall that $(\chi_k)_k$ is a partition of the unity, up to a factor $2$. Also notice that a product $\chi_\ell \vphi$ with $\smash{\vphi \in W^{m,q'}_0(U_k^*)}$ has possibly non-empty support only if $|\ell-k|\leq 3$. These remarks justify each step of the following computations, for any $k\in\Z$,
			\begin{align*}
			\|u\|_{U_k^*} 
			&= \sup_{\|\vphi\|_{0,U_k^*}=1} \langle u, \vphi\rangle_{U_k^*} \\
			&\simeq \sup_{\|\vphi\|_{0,U_k^*}=1} \sum|_{\ell-k|\leq 3} \langle u, \chi_\ell \vphi \rangle_{U_k^*}\\
			&\leq \sup_{\|\vphi\|_{0,U_k^*}=1} \sum|_{\ell-k|\leq 3} \langle  \chi_\ell u, \vphi \rangle_{U_k^*}\\
			&\leq \sup_{\|\vphi\|_{0,\Omega}=1} \sum|_{\ell-k|\leq 3} \langle  \chi_\ell u, \vphi \rangle_{\Omega};\\
			&\lesssim \|u\|_{\uloc}.
			\end{align*}
			Finally, the reciprocal inequality is proved by noticing that $\chi_k\vphi$ belongs to $W^{m,q'}_0(U_k^*)$ for any $\vphi \in W^{m,q'}_0(\Omega)$;
			\begin{align*}
			\|\chi_k u\|_{\Omega} 
			&= \sup_{\|\vphi\|_{0,\Omega}=1} \langle u, \chi_k \vphi \rangle_\Omega\\ 
			&\leq \sup_{\|\vphi\|_{0,\Omega}=1} \|u\|_{U_k^*} \|\chi_k\vphi\|_{0,U_k^*}\\
			&\leq C(\chi,m) \|u\|_{U_k^*}\sup_{\|\vphi\|_{0,\Omega}=1} \|\vphi\|_{U_k^*} \\
			&\leq C \|u\|_{U_k^*}.
			\end{align*}
		\hfill$\square$
		
		\begin{remark}
			For $m \in \N^*$, we do not have in general
			\begin{equation*}
			\sup_{k\in\Z} \|u\|_{W^{-m,q}(U_k)} \gtrsim \sup_{k\in\Z} \|u\|_{W^{-m,q}(U_k^*)}.
			\end{equation*}
			Indeed, consider the Dirac mass $\delta_{(0,1/2)}$ belonging to $H^{-2}(\Omega)$ and therefore to $H^{-2}_\uloc(\Omega)$, with $\|\delta\|_{H^{-2}_\uloc} > 0$. Nevertheless, for any $k \in \Z$ we have
			\begin{equation*}
			\forall \vphi \in H^1_0(U_k), \quad \langle \delta, \vphi \rangle_{U_k} = 0.
			\end{equation*}
			The reason is that the support of an element of the negative Sobolev spaces can be included in the complementary of $\cup_k U_k$. This does not happen when the considered subdomains family covers the whole domain, as does $\left( U_k^*\right)_k$.
		\end{remark}
	
		\subsection{Proof of Lemma \ref{lem:gluloc}} \label{ap:gluloc}
			The case $f \in L^2_\uloc(\Omega_n)$ is straightforward,
			\begin{equation*}
				\|f\|_{L^2(\Omega_n)}^2 = \sum_{k=-n}^{n-1} \|f\|_{L^2(U_k)}^2 \leq 2n\|f\|_{L^2_\uloc}^2.
			\end{equation*}
			The case $f \in H^{-1}_\uloc(\Omega_n)$ requires a little more care. We use notations from the proof of Lemma \ref{lem:comp}. Notice that $\sum_{\ell=-n-1}^n \chi_\ell = 2$ on $\Omega_n$, so for any $\vphi \in H^1_0(\Omega_n)$ we have
			\begin{align*}
				\langle f, \vphi \rangle_{\Omega_n} 
					&\simeq \sum_{\ell = -n-1}^n \langle f, \chi_\ell \vphi \rangle_{U_\ell^*} \\
					&\lesssim \sum_{\ell = -n-1}^n \|f\|_{H^{-1}(U_\ell^*)}\|\chi_\ell\vphi\|_{H^1} \\
					&\lesssim \|f\|_{H^{-1}_\uloc} \sum_{\ell =-n-1}^n \|\chi_\ell \vphi\|_{H^1}\\
					&\lesssim \|f\|_{H^{-1}_\uloc}\sum_{\ell=-n-1}^n \|\vphi\|_{H^1(U_\ell^*)} \\
					&\lesssim \|f\|_{H^{-1}_\uloc} (2n+2)^{1/2} \left( \sum_{\ell=-n-1}^n \|\vphi\|_{H^1(U_\ell^*)}^2 \right)^{1/2},
			\end{align*}
			
			\noindent where we used Lemma \ref{lem:comp}. Now, bound $2n+2$ by $4n$ and notice that the last sum is equivalent to $\|\vphi\|_{H^1}$ to complete the proof. \hfill $\square$
		
	\subsection{Proof of Lemma \ref{lem:tec}} \label{ap:rel}
		Set $(E_{n,k})_{n,k}$ a family of positive real numbers, indexed by the couples $(n,k) \in \N^2$ such that $1 \leq k \leq n$, non-decreasing according to $k$ for a fixed $n$, obeying
		\begin{equation} \label{eq:obey}
			\forall 1 \leq k \leq n, \quad E_{n,k} \leq C\big(E_{n,k+1} - E_{n,k} + F(k+1)\big)
		\end{equation}
		and satisfying
		\begin{equation} \label{eq:init} 
			\forall n, \quad E_{n,n} \leq C F n,
		\end{equation}
		where $F$ is a constant playing the role of $\|\bf\|_{H^{-1}_\uloc}^2$. Let us show that there exists $\alpha > 0$ and $k_0 \in \N^*$ such that
		\begin{equation} \label{eq:rec}
		\forall k,n, \quad k_0 \leq k \leq n \quad \implies \quad E_k \leq \alpha F k.
		\end{equation}
		By (\ref{eq:init}) we already know that (\ref{eq:rec}) is satisfied for any $n \in \N^*$ and $k=n$, with $\alpha = C$. For a fixed $n$, let $k_0$ be the greatest index such that (\ref{eq:rec}) is not satisfied, meaning
		\begin{equation} \label{eq:contr}
			E_{n,k_0} > \alpha F k_0.
		\end{equation}
		Therefore, plugging (\ref{eq:contr}) in (\ref{eq:obey}) and using the definition of $k_0$ provides
		\begin{equation*}
			\alpha(1+C) F k_0 \leq CF(\alpha + 1)(k_0 + 1),
		\end{equation*}
		which is equivalent to
		\begin{equation*}
			\frac{k_0}{k_0+1} \leq \frac{C}{C+1}\left( 1 + \frac{1}{\alpha} \right) =: C_\alpha.
		\end{equation*}
		Up to the choice of a greater $\alpha$, we can assume that $C_\alpha < 1$. This implies that $k_0 \leq \frac{C_\alpha}{1-C_\alpha}$, independently of $n$ and $F$. Therefore, we conclude that for any $k,n$ such that $k_0\leq k\leq n$ we have
		\begin{equation*}
			E_k \leq \alpha F k.
		\end{equation*}\hfill $\square$
	
	\section*{Acknowledgement}
		The author thanks Anne-Laure Dalibard and Julien Guillod for introducing him to this problem, for their guidance and the quality of their supervision. The author also thanks Amina Mecherbet for giving him early access to one of her production, and for her useful comments about the present work.
	
	\subsection*{Fundings}
		This project has received funding from the European Research Council (ERC) under the European Union's Horizon 2020 research and innovation program Grant agreement No 637653, project BLOC ``Mathematical Study of Boundary Layers in Oceanic Motion''. This work was also supported by the SingFlows project, grant ANR-18-CE40-0027 of the French National Research Agency (ANR).

	\subsection*{Conflict of interest}
		The author declares that there is no conflict of interest regarding the publication of this article.

\bibliographystyle{plain}
\bibliography{stokestransport}
\end{document}